\DeclareMathSymbol{\subsetneqq}{\mathbin}{AMSb}{36}
\newcommand{\R}{\mathbb{R}}
\newcommand{\C}{\mathbb{C}}
\newcommand{\beq}{\begin{eqnarray}}
\newcommand{\eeq}{\end{eqnarray}}
\newcommand{\bq}{\begin{equation}}
\newcommand{\eq}{\end{equation}}
\newcommand{\beqn}{\begin{eqnarray*}}
\newcommand{\eeqn}{\end{eqnarray*}}
\newcommand{\bex}{\begin{exo}}
\newcommand{\eex}{\end{exo}}
\newcommand{\ben}{\begin{enumerate}}
\newcommand{\een}{\end{enumerate}}
\newtheorem{th1}{{\bf Theorem}}[section]
\newtheorem{thm}[th1]{{\bf Theorem}}
\newtheorem{lem}[th1]{{\bf Lemma}}
\newtheorem{prop}[th1]{{\bf Proposition}}
\newtheorem{cor}[th1]{{\bf Corollary}}
\newtheorem{rem}[th1]{\bf Remark}
\newtheorem{rems}[th1]{\bf Remarks}
\newtheorem{defi}[th1]{\bf Definition}
\author[T. Saanouni]{Tarek Saanouni}
\address{Departement of Mathematics, College of Science and Arts in Uglat Asugour, Qassim University, Buraydah, Kingdom of Saudi Arabia.}
\email{\sl t.saanouni@qu.edu.sa}
\email{\sl Tarek.saanouni@ipeiem.rnu.tn}
\subjclass[2010]{35Q55}
\keywords{Inhomogeneous Choquard equation, scattering.}
\title[Choquard INLS]{Scattering theory for a class of radial focusing inhomogeneous Hartree equations}
\date{\today}
\begin{document}
\begin{abstract}
This paper studies the asymptotic behavior of global solutions to the generalized Hartree equation 
$$i\dot u+\Delta  u+(I_\alpha *|\cdot|^b|u|^p)|x|^b|u|^{p-2}u=0 .$$
Indeed, using a new approach due to \cite{dm}, one proves the scattering of the above inhomogeneous Choquard equation in the mass-super-critical and energy sub-critical regimes with radial setting.
\end{abstract}
\maketitle
\vspace{ 1\baselineskip}
\renewcommand{\theequation}{\thesection.\arabic{equation}}
\section{Introduction}
This note is concerned with the energy scattering of the Cauchy problem for the following inhomogeneous generalized Hartree equation 
\begin{equation}
\left\{
\begin{array}{ll}
i\dot u+\Delta  u+(I_\alpha *|\cdot|^b|u|^p)|x|^b|u|^{p-2}u=0 ;\\
u(0,.)=u_0,
\label{S}
\end{array}
\right.
\end{equation}
where $u: \R\times\R^N \to \C$, for some $N\geq3$. The unbounded inhomogeneous term is $|\cdot|^b$, for some $b\neq0$  and the Riesz-potential is defined on $\R^N$ by 
$$I_\alpha:=\frac{\Gamma(\frac{N-\alpha}2)}{\Gamma(\frac\alpha2)\pi^\frac{N}22^\alpha|\cdot|^{N-\alpha}}.$$
Here and hereafter, one assumes the following conditions done in \cite{saa}, 
\begin{equation}\label{cnd}
\min\{-b,\alpha,N-\alpha,N+b,4+\alpha+2b-N\}>0.
\end{equation}
Some particular cases of the above equation model many physical phenomena. For instance, it arises in the study of the mean-field limit of large systems of non-relativistic bosonic atoms and molecules \cite{fl}. Moreover, it describes the propagation of electromagnetic waves in plasmas \cite{bc}. The fourth space dimensional case gives the so-called Schr\"odinger-Poisson equation, which appeares in semi-conductor and quantum mechanics theories \cite{drz}.\\


The inhomogeneous Choquard equation \eqref{S} satisfies the time-space scaling invariance
$$u_\lambda=\lambda^\frac{2+2b+\alpha}{2(p-1)}u(\lambda^{2}.,\lambda .),\quad\lambda>0.$$
The unique homogeneous Sobolev norm invariant under the previous space scaling appears in the following identity
$$\|u_\lambda(t)\|_{\dot H^\mu}=\lambda^{\mu-s_c}\|u(\lambda^2t)\|_{\dot H^\mu},$$
where the critical Sobolev exponent is
$$s_c:=\frac N2-\frac{2+2b+\alpha}{2(p-1)}.$$
The Schr\"odinger-Hartree equation \eqref{S} is said to be mass-critical or $L^2$-critical if $s_c=0$, which is equivalent to $p=1+\frac{\alpha+2+2b}N$. It is called energy-critical or $\dot H^1$-critical if $s_c=1$, which corresponds to $p=1+\frac{2+2b+\alpha}{N-2}$. This work is concerned with the mass super-critical and energy sub-critical regimes $0<s_c<1$.\\

To the author knowledge, the inhomogeneous Choquard problem, was only treated in \cite{saa}, where the local well-posedness in the energy space was proved via an adapted Gagliardo-Nirenberg type identity. In the focusing regime, a threshold of global existence versus finite time blow-up of solutions was obtained.\\

 On the other hand, the particular case $b=0$ in \eqref{S}, was investigated in many directions. Indeed, the existence of the energy sub-critical and mass-super-critical (and the mass-critical non-global) solutions to the non-linear Choquard problem were investigated in \cite{fy}. In the defocusing case, the scattering of energy global solutions was obtained in \cite{tv}. In the focusing sign, the existence and asymptotic properties of standing waves, were treated in \cite{mvs}. \\

It is the aim of this paper to extend the previous work \cite{saa}, by the study of the asymptotic behavior of the energy global solutions to the focusing Choquard problem \eqref{S}. Indeed, one proves that every global solution to the Choquard equation \eqref{S} is asymptotic, as $t\to\pm\infty,$ to a solution of the associated linear Schr\"odinger equation. \\

The method used in this note, due to \cite{dm}, is based on a scattering criterion \cite{tao} and Morawetz estimates. In \cite{st2} the scattering for the homogeneous non-linear Choquard problem, which corresponds to $b=0$ in \eqref{S}, was proved using the concentration-compactness-rigidity approach introduced in \cite{km}. This result was revisited recently in \cite{aka} by giving an alternative proof.\\

The rest of this paper is organized as follows. The next section contains the main results and some technical tools needed in the sequel. Some variational estimates are given in section three. Section four is devoted to establish a scattering criterion. In the fifth section, a Morawetz estimate is proved. The last section is concerned with the proof of scattering. Finally, a variance identity and a Strichartz type estimate of the source term are proved in the Appendix.\\

Here and hereafter, $C$ will denote a constant which may vary from line to line and if $A$ and $B$ are non-negative real numbers, $A\lesssim B$  means that $A\leq CB$. \\
Denote, for simplicity, the Lebesgue space $L^r:=L^r({\R^N})$ with the usual norm $\|\cdot\|_r:=\|\cdot\|_{L^r}$ and $\|\cdot\|:=\|\cdot\|_2$. Take $H^1:=H^1({\R^N})$ be the usual inhomogeneous Sobolev space endowed with the complete norm 
$$\|\cdot\|_{H^1} := \Big(\|\cdot\|^2 + \|\nabla \cdot\|^2\Big)^\frac12.$$
If $X$ is an abstract space $C_T(X):=C([0,T],X)$ stands for the set of continuous functions valued in $X$ and $X_{rd}$ is the subset of radial elements. Moreover, for an eventual solution to \eqref{S}, $T^*>0$ denotes it's lifespan. Finally, $x^\pm$ are two real numbers near to $x$ satisfying $x^+>x$ and $x^-<x$.
\section{Background and main results.}
This section contains the contribution of this paper and some standard estimates needed in the sequel.
\subsection{Preliminary}
 Here and hereafter denote the following quantities. For $a,b\in\R$ and $u\in H^1$, let the real numbers
\begin{gather*}
p_*:=1+\frac{\alpha+2+2b}N,\quad p^*:=1+\frac{\alpha+2+2b}{N-2};\\
B:=Np-N-\alpha-2b,\quad A:=2p-B.
\end{gather*}
Denote the operator $\left\langle \cdot\right\rangle:=(1+\nabla)\cdot$ and the source term
$$\mathcal N:=\mathcal N(x,u):=(I_\alpha*|\cdot|^b|u|^p)|x|^b|u|^{p-2}u.$$
Take also a radial smooth function
$$\psi\in C_0^\infty(\R^N),\quad supp(\psi)\subset \{|x|<1\}, \quad\psi=1\,\,\mbox{on}\,\,\{|x|<\frac12\},\quad\psi_R:=\psi(\frac\cdot R).$$
The existence of ground states was established in \cite{saa}.
\begin{defi}
We call ground state of \eqref{S}, any solution to 
\begin{equation}\label{grnd}
\Delta\phi-\phi+(I_\alpha*|\cdot|^b|\phi|^p)|x|^b|\phi|^{p-2}\phi=0,\quad0\neq\phi\in H^1,
\end{equation}
which minimizes the problem
$$m:=\inf_{0\neq u\in{H^1}}\Big\{S(u):=(E+M)(u) \quad\mbox{s\,. t}\quad \mathcal I(u)=0\Big\},$$
where
$$\mathcal I(u):=\frac 4N\Big(\|\nabla u\|^2-\frac B{4p}\int_{\R^N}(I_\alpha*|\cdot|^b|u|^p)|x|^b|u|^p\,dx\Big).$$
\end{defi}
The next Gagliardo-Nirenberg type estimates related to the inhomogeneous Choquard problem \eqref{S} were proved in \cite{saa}.
\begin{prop}\label{gag}
Let $N\geq1$,  $b,\alpha$ satisfying \eqref{cnd} and $1+\frac{2b+\alpha}N< p< p^*$. Then, 
\begin{enumerate}
\item[1.]
there exists a positive constant $C(N,p,b,\alpha)$, such that for any $u\in H^1$,
$$\int_{\R^N}(I_\alpha*|\cdot|^b|u|^p)|x|^b|u|^p\,dx\leq C(N,p,b,\alpha)\|u\|^A\|\nabla u\|^B;$$
\item[2.]
the next minimization problem is attained in some $Q\in H^1$,
$$\frac1{C(N,p,b,\alpha)}=\inf\Big\{J(u):=\frac{\|u\|^A\|\nabla u\|^B}{\int_{\R^N}(I_\alpha*|\cdot|^b|u|^p)|x|^b|u|^p\,dx},\quad0\neq u\in H^1\Big\}$$
such that ${C(N,p,b,\alpha)}=\int_{\R^N}(I_\alpha*|\cdot|^b|Q|^{p})|x|^b|Q|^p\,dx$ and
$$-B\Delta Q+AQ-\frac{2p}{C(N,p,b,\alpha)}(I_\alpha*|\cdot|^b|Q|^p)|x|^b|Q|^{p-2}Q=0;$$
\item[3.]
furthermore, 
\begin{equation}\label{part3}
C(N,p,b,\alpha)=\frac{2p}{A}(\frac AB)^{\frac{B}2}\|\phi\|^{-2(p-1)},
\end{equation}
where $\phi$ is a ground state solution to \eqref{grnd}.
\end{enumerate}
\end{prop}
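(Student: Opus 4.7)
The three parts require three different techniques, and I would tackle them in order.

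For \emph{Part (1)}, since $0<\alpha<N$ by \eqref{cnd}, the Hardy-Littlewood-Sobolev inequality applied with $s=\frac{2N}{N+\alpha}$ yields
\[
\int(I_\alpha*|\cdot|^b|u|^p)|x|^b|u|^p\,dx \lesssim \bigl\||x|^b|u|^p\bigr\|_{L^s}^2.
\]
I would control $\||x|^b|u|^p\|_{L^s}$ by splitting $\R^N=\{|x|\le 1\}\cup\{|x|>1\}$ and combining a weighted Hardy-Sobolev inequality on each region with the classical Gagliardo-Nirenberg interpolation between $L^2$ and $\dot H^1$. The exponents $A,B$ are forced by the dilation $u\mapsto u(\lambda\cdot)$ already used in the introduction, and the hypothesis $1+\frac{2b+\alpha}{N}<p<p^*$ is precisely what keeps $A,B>0$ so the interpolation is admissible.

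For \emph{Part (2)}, I would apply the direct method. Let $(u_n)$ be a minimizing sequence for $J$. Exploiting the symmetry $u\mapsto \mu u(\lambda\cdot)$ of $J$, one may normalize so that $\|u_n\|=\|\nabla u_n\|=1$; hence $(u_n)$ is bounded in $H^1$ and admits a weak limit $Q$. The delicate step is the strong convergence
\[
\int(I_\alpha*|\cdot|^b|u_n|^p)|x|^b|u_n|^p\,dx \longrightarrow \int(I_\alpha*|\cdot|^b|Q|^p)|x|^b|Q|^p\,dx.
\]
The singular/decaying weight $|x|^b$ breaks translation invariance and localizes mass near the origin, ruling out vanishing; a Brezis-Lieb decomposition combined with Rellich-Kondrachov-type compactness (sharpened by Strauss' radial lemma in the radial setting) then forces the convergence, whence $J(Q)=1/C$. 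The Euler-Lagrange equation comes from $\left.\frac{d}{d\varepsilon}\right|_{\varepsilon=0}J(Q+\varepsilon\psi)=0$, and a final two-parameter rescaling of $Q$ to achieve $\|Q\|=\|\nabla Q\|=1$ and $I(Q)=C$ puts it in the stated form. This compactness step is the main obstacle: the nonlocal Riesz kernel combined with the weight $|x|^b$ requires simultaneously excluding concentration at the origin and escape to infinity, and a careful audit of the exponents using \eqref{cnd} is needed to verify that the relevant weighted Lebesgue embeddings are strictly subcritical.

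For \emph{Part (3)}, I would connect $Q$ to a ground state $\phi$ of \eqref{grnd} via the ansatz $Q(x)=\mu\phi(\lambda x)$. Matching the coefficients of $\Delta Q$, $Q$, and $\mathcal N(Q)$ in the Euler-Lagrange equation with those of \eqref{grnd} pins down $\lambda^2=A/B$ and a specific $\mu$. Since $J$ is scale-invariant, $1/C=J(Q)=J(\phi)$, so it suffices to compute $J(\phi)$. Testing \eqref{grnd} against $\phi$ produces the Nehari identity $\int(I_\alpha*|\cdot|^b|\phi|^p)|x|^b|\phi|^p\,dx=\|\nabla\phi\|^2+\|\phi\|^2$, while testing against $x\cdot\nabla\phi$ produces the Pohozaev identity
\[
\tfrac{N-2}{2}\|\nabla\phi\|^2+\tfrac{N}{2}\|\phi\|^2=\tfrac{N+\alpha+2b}{2p}\int(I_\alpha*|\cdot|^b|\phi|^p)|x|^b|\phi|^p\,dx.
\]
Eliminating the nonlocal term and using $A+B=2p$ collapses these to the linear relation $A\|\nabla\phi\|^2=B\|\phi\|^2$, and substituting this back into $J(\phi)=\|\phi\|^A\|\nabla\phi\|^B/I(\phi)$ yields \eqref{part3}.
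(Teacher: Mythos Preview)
The paper does not prove this proposition at all: the sentence immediately preceding it reads ``The next Gagliardo-Nirenberg type estimates related to the inhomogeneous Choquard problem \eqref{S} were proved in \cite{saa},'' and no argument is given in the present manuscript. So there is no in-paper proof to compare your proposal against.

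That said, your outline is the standard route one would expect in \cite{saa}. A few comments on where your sketch would need tightening. In Part~(1), the region-splitting $\{|x|\le 1\}\cup\{|x|>1\}$ only produces a bound of the form $\int\lesssim \|u\|_{H^1}^{2p}$, not the scale-sharp product $\|u\|^A\|\nabla u\|^B$; you correctly observe that the exponents are forced by dilation, but you must actually \emph{perform} the optimization over $u\mapsto u(\lambda\cdot)$ to upgrade the inhomogeneous bound to the homogeneous one. In Part~(2), your compactness discussion mixes two distinct mechanisms: Strauss' lemma requires first passing to a radial minimizing sequence, which in turn requires checking that Schwarz symmetrization does not increase $J$ (nontrivial here because the weight $|x|^b$ sits \emph{inside} the convolution, so $(|x|^b|u|^p)^*\neq |x|^b|u^*|^p$ in general); alternatively, since $b<0$ the weight already decays at infinity and breaks translation invariance, which by itself yields the needed compactness without symmetrization---but then Strauss is irrelevant and you should say so. Your Part~(3) is correct: the Nehari/Pohozaev combination indeed gives $A\|\nabla\phi\|^2=B\|\phi\|^2$, and the paper uses exactly this relation later (see the proof of Lemma~\ref{bnd}, where $\frac{B}{A}=\bigl(\frac{\|\nabla\phi\|}{\|\phi\|}\bigr)^2$ is invoked).
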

Here and hereafter, $\phi$ denotes a solution to \eqref{grnd}, which satisfies \eqref{part3}. The next scale invariant quantities \cite{dr} called respectively, mass-energy and mass-gradient give a threshold of global existence and scattering versus finite time blow-up of energy solutions to the above Choquard problem.
$$\mathcal M\mathcal E [u]:=\frac{E[u]^{s_c}M[u_0]^{1-s_c}}{E[\phi]^{s_c}M[\phi]^{1-s_c}},\quad\mathcal G\mathcal M[u]:=\frac{\|\nabla u\|^{s_c}\|u\|^{1-s_c}}{\|\nabla\phi\|^{s_c}\|\phi\|^{1-s_c}}.$$
\begin{rem}
Using Pohozaev identities, the terms $E[\phi]^{s_c}M[\phi]^{1-s_c}$ and $\|\nabla\phi\|^{s_c}\|\phi\|^{1-s_c}$ are invariant of the choice of $\phi$.
\end{rem}
The inhomogeneous Choquard problem \eqref{S} is locally well-posed in the energy space \cite{saa}.
\begin{prop}\label{t0}
Let $N\geq3$, $b,\alpha$ satisfying \eqref{cnd}, $u_0\in H^1$ and $2\leq p< p^*$. Then, there exists $T>0$ and a unique local solution to the inhomogeneous Choquard problem \eqref{S},
$$ u\in C([0,T],H^1).$$ 
Moreover,
\begin{enumerate}
\item[1.]  the solution satisfies the mass and energy conservation laws
\begin{gather*}
Mass:=M(u(t)) :=\int_{\R^N}|u(t,x)|^2dx = M(u_0);\\
Energy:=E(u(t)) :=\|\nabla u(t)\|^2-\frac1p\int_{\R^N}(I_\alpha *|\cdot|^b|u(t)|^p)|x|^b|u(t,x)|^p\,dx= E(u_0); 
\end{gather*}
\item[2.] $u\in L^q_{loc}((0,T),W^{1,r})$ for any Strchartz admissible pair $(q,r)$.
\end{enumerate}
\end{prop}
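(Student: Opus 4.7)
The plan is to run a standard Banach fixed point on the Duhamel reformulation
$$\Phi(u)(t):=e^{it\Delta}u_0-i\int_0^t e^{i(t-s)\Delta}\mathcal N(u(s))\,ds$$
inside a closed ball of a Strichartz-adapted subspace of $C_T(H^1)$. Concretely, I would fix a Schr\"odinger-admissible pair $(q,r)$ with $r$ close to $\frac{2N}{N-2}$ and work in
$$X_T:=\Big\{u\in C_T(H^1)\cap L^q([0,T],W^{1,r}):\ \|u\|_{L^\infty_TH^1}+\|\langle\nabla\rangle u\|_{L^q_TL^r_x}\le M\Big\},\quad M\sim\|u_0\|_{H^1},$$
equipped with the weaker distance $d(u,v)=\|u-v\|_{L^q_TL^r_x}$, in order to avoid differentiating the nonlinearity when checking contraction. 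The homogeneous linear Strichartz estimate places $e^{it\Delta}u_0$ in $X_T$; via its dual, the contraction then reduces to a nonlinear bound of the form
$$\|\langle\nabla\rangle\mathcal N(u)\|_{L^{\tilde q'}_TL^{\tilde r'}_x}\lesssim T^\theta\|u\|_{X_T}^{2p-1}$$
for a second admissible pair $(\tilde q,\tilde r)$ and some $\theta>0$, together with its Lipschitz analogue for $\mathcal N(u)-\mathcal N(v)$.

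The heart of the argument is this nonlinear estimate on $\mathcal N(x,u)=(I_\alpha*|\cdot|^b|u|^p)|x|^b|u|^{p-2}u$. I would combine three classical ingredients: the Hardy--Littlewood--Sobolev inequality, which converts the Riesz convolution $I_\alpha*$ into a shift of integrability exponents by $\alpha/N$; H\"older's inequality, to distribute the $2p-1$ powers of $u$ between the Strichartz norm $L^r$ and the Sobolev norm $L^{2N/(N-2)}$; and weighted Hardy--Sobolev inequalities, to absorb the singular factor $|x|^b$ (recall $-b>0$ from \eqref{cnd}) into homogeneous Sobolev norms. The whole set of positivity conditions in \eqref{cnd} is precisely what places the involved HLS, Hardy and Sobolev exponents in their admissible ranges, whereas the upper bound $p<p^*$ guarantees that the exponent of $\|\nabla u\|_r$ coming out of H\"older stays strictly below the one dictated by the Strichartz scaling, which is the source of the gain $T^\theta$.

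The hardest step is distributing $\langle\nabla\rangle$ through $\mathcal N$: besides derivatives falling on the outer $u$ and on the $|u|^p$ under the convolution, one encounters the borderline term in which the gradient hits the weight $|x|^b$ and produces the more singular factor $|x|^{b-1}$; controlling it requires a slightly stronger Hardy-type inequality, which is exactly what $N+b>0$ and $4+\alpha+2b-N>0$ make available. Once the contraction is established, existence and uniqueness in $C_T(H^1)$ follow, and re-inserting the fixed point into the inhomogeneous Strichartz estimate with an arbitrary admissible pair produces the local bound $u\in L^q_{\mathrm{loc}}((0,T),W^{1,r})$ of part (2). The conservation laws of part (1) are then obtained in the standard way: for regularized data $u_0\in H^2$ the solution is classical, one tests \eqref{S} against $\bar u$ and against $\partial_t\bar u$ and integrates in space, taking imaginary and real parts to get $\partial_tM(u)=0$ and $\partial_tE(u)=0$ respectively (the energy computation uses that $\mathcal N$ is the variational derivative of $\tfrac{1}{2p}\int(I_\alpha*|\cdot|^b|u|^p)|x|^b|u|^p\,dx$); the identities then pass to the $H^1$ level by continuous dependence on the data.
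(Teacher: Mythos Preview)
The paper does not actually prove Proposition~\ref{t0}; it is quoted from the earlier work \cite{saa}. So there is no in-paper proof to compare against line by line. Your outline --- Duhamel formulation, contraction in a Strichartz space, Hardy--Littlewood--Sobolev for the Riesz potential, H\"older to split the $2p-1$ copies of $u$, and a density/approximation argument for the conservation laws --- is the standard scheme and is essentially what \cite{saa} carries out.

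The one methodological difference worth flagging is how the singular weight $|x|^b$ is absorbed. You propose using weighted Hardy--Sobolev inequalities to trade $|x|^b$ directly for Sobolev regularity. The approach taken in \cite{saa}, and visible in this paper in the proof of Lemma~\ref{tch2} and in Remark~\ref{rm}, is instead to split $\R^N$ into $\{|x|<1\}$ and $\{|x|>1\}$ and to place $|x|^b$ (respectively $|x|^{b-1}$) in a Lebesgue space $L^\mu$ on each piece, choosing $\mu$ just above or just below $N/|b|$ so that the relevant integral converges; H\"older then closes the estimate with Strichartz exponents that can be taken slightly different on the two regions. This splitting is more elementary and makes the admissibility bookkeeping transparent, and it handles the extra loss $|x|^{b-1}$ from differentiating the weight simply by shifting $\mu$ and compensating with the Sobolev embedding $W^{1,r}\hookrightarrow L^{Nr/(N-r)}$. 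Your Hardy--Sobolev route should also work under \eqref{cnd}, but it is not the argument used here.
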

Let us end this sub-section with some definitions in the spirit of \cite{dm}. Take, for $R>>1$, the radial function defined on $\R^N$ by 
$$a(x):=\left\{
\begin{array}{ll}
\frac12|x|^2,\quad |x|\leq \frac R2 ;\\
R|x|,\quad |x|>R.
\end{array}
\right.$$
and satisfying in the centered annulus $C(0,\frac R2,R)$,
$$\partial_r a > 0,\quad\partial^2_r a \geq 0,\quad |\partial^\alpha a|\leq C_\alpha R|\cdot|^{1-\alpha}\quad\forall\, |\alpha| \geq 1.$$
Here, $\partial_r a:= \frac\cdot{|\cdot|}.\nabla a$ denotes the radial derivative. Note that on the centered ball of radius $\frac R2$, one has
$$a_{jk}=2\delta_{jk},\quad \Delta a = 2N\quad\mbox{and}\quad \Delta^2 a = 0.$$
Moreover, for $|x| > R$, 
$$a_{jk} =\frac R{|x|}\Big(\delta_{jk}-\frac{x_jx_k}{|x|^2}\Big)\quad\mbox{and}\quad \Delta a=\frac{(N -1)R}{|x|},\,\, \Delta^2 a = 0.$$
Define the variance potential
$$V_a:=\int_{\R^N}a(x)|u(.,x)|^2\,dx,$$
and the Morawetz action
$$M_a:=2\Im\int_{\R^N} (a_ju_j)\bar u\,dx=2\Im\int_{\R^N} (\nabla a\nabla u)\bar u\,dx.$$
With a standard way, subscripts denote partial derivatives and repeated index are summed.
\subsection{Main result}
The main goal of this manuscript is to prove the following scattering result.
\begin{thm}\label{sctr}
Take $N\geq3$, $b,\alpha$ satisfying \eqref{cnd}, $p_*<p<p^*$ such that $p\geq2$ and $u_0\in H^1_{rd}$ satisfying 
$$\max\{\mathcal M\mathcal E(u_0),\mathcal G\mathcal M(u_0)\}<1.$$
Then, there exist a unique global solution $u\in C(\R,H^1)$ to \eqref{S}, which scatters in both directions. Precisely, there exists $u_\pm\in H^1$ such that
$$\lim_{t\to\pm\infty}\|u(t)-e^{it\Delta}u_\pm\|_{H^1}=0.$$
\end{thm}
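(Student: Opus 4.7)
My plan is to carry out the Dodson--Murphy program adapted to the inhomogeneous Hartree setting, following the four ingredients announced by the author: variational coercivity, a Tao--Dodson--Murphy scattering criterion, a truncated virial/Morawetz estimate, and an averaging argument. First, using Proposition \ref{gag} together with the identity \eqref{part3} and conservation of mass and energy, I would show that the strict subthreshold condition $\max\{\mathcal M\mathcal E(u_0),\mathcal G\mathcal M(u_0)\}<1$ is preserved by the flow: a bootstrap based on the continuity of $t\mapsto\|\nabla u(t)\|$ and the sharp Gagliardo--Nirenberg inequality yields some $\delta>0$ for which
$$\|\nabla u(t)\|^{s_c}\|u(t)\|^{1-s_c}\leq(1-\delta)\|\nabla\phi\|^{s_c}\|\phi\|^{1-s_c},\qquad\forall\,t.$$
This gives at once global existence, a uniform bound $\sup_{t\in\R}\|u(t)\|_{H^1}=:E_0<\infty$, and a coercivity estimate of the form $\mathcal I(u(t))\geq c_\delta\,\|\nabla u(t)\|^2$.

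Next, I would prove a scattering criterion asserting that any radial global solution with $\sup_t\|u\|_{H^1}\leq E_0$ scatters forward in time as soon as
$$\liminf_{T\to+\infty}\int_T^{T+T_0}\int_{|x|<R}|u(t,x)|^p\,dx\,dt<\varepsilon$$
for appropriate $R=R(E_0)$, $T_0=T_0(E_0)$ and small $\varepsilon=\varepsilon(E_0)$. Following Tao and Dodson--Murphy, this is obtained by writing $u$ from a late time as a linear evolution plus a Duhamel remainder, splitting the source $\mathcal N$ into an interior piece controlled by the smallness hypothesis and an exterior piece controlled via the radial Strauss decay $|u(x)|\lesssim|x|^{-(N-1)/2}\|u\|_{H^1}$, which benefits from $b<0$, and closing the estimate in a Strichartz-admissible norm by means of the nonlinear Strichartz bound announced for the Appendix.

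The third step is the truncated Morawetz estimate attached to the radial weight $a$ defined above. Using the variance identity of the Appendix and the Hessian structure of $a$ --- namely $a_{jk}=2\delta_{jk}$ for $|x|\leq R/2$ and $a_{jk}\geq 0$ elsewhere, together with the pointwise bounds on higher derivatives --- the coercivity from the first step controls the positive main term, while the correction terms produced by the cut-off and by the nonlocal source are absorbed using the radial decay. The outcome should be an averaged spacetime bound
$$\frac{1}{T}\int_0^T\int_{|x|<R/2}(I_\alpha*|\cdot|^b|u|^p)|x|^b|u|^p\,dx\,dt\lesssim\frac{R}{T}+o_R(1).$$
Choosing $R$ large and then $T\gg R$ supplies a time window on which the smallness hypothesis of the scattering criterion is satisfied, after converting the Choquard density back to a local $L^p$-in-space bound via Hardy--Littlewood--Sobolev and the Strauss inequality. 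Forward scattering follows; backward scattering is obtained by time reversal, and the asymptotic profiles $u_\pm\in H^1$ are then constructed in the standard way from the Duhamel formula.

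The main obstacle is expected to be the Morawetz step. Differentiating $M_a$ in time produces, beyond the quadratic Laplacian contribution, a virial term of the form $\int(I_\alpha*|\cdot|^b|u|^p)\,\nabla a\cdot\nabla(|x|^b|u|^{p-2}u)\,\bar u\,dx$, in which the singular weight $|x|^b$ with $b<0$ interacts simultaneously with the Riesz convolution and with $\nabla a$. One has to separate the convex core $|x|\leq R/2$, where a Pohozaev-like rewriting combined with the coercive lower bound on $\mathcal I(u)$ yields the good sign, from the annulus and the exterior, where only $a_{jk}\geq 0$ is available and the error terms must be absorbed using the Strauss decay; this is where the restriction $4+\alpha+2b-N>0$ from \eqref{cnd} is expected to be decisive.
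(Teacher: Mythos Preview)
Your proposal follows the same Dodson--Murphy architecture as the paper and correctly identifies all four ingredients. The execution differs from the paper in two places, and one of your steps is stated in a way that would not work as written.

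First, the scattering criterion you formulate is a time-averaged hypothesis $\int_T^{T+T_0}\int_{|x|<R}|u|^p\,dx\,dt<\varepsilon$, whereas the paper's Proposition~\ref{crt} uses the pointwise-in-time mass condition $\liminf_{t\to+\infty}\int_{|x|<R}|u|^2\,dx<\varepsilon$. Both are valid variants, but the pointwise $L^2$ version pairs more cleanly with a mean-value extraction from the Morawetz bound. Second, the paper does not record the Morawetz output as a bound on the Choquard density. It instead upgrades the coercivity to $\mathcal I(\psi_R u)\ge\delta'\|\psi_R u\|_{2Np/(N+\alpha+2b)}^2$ via the intermediate step $\mathcal I\ge\delta\|\nabla\cdot\|^2$ and a Sobolev embedding (Lemma~\ref{crcv}), so the virial directly controls a Lebesgue norm; the mean-value theorem then yields a sequence $t_n$ along which this norm is small on growing balls, and H\"older against the volume downgrades it to $L^2$ smallness, feeding the criterion.

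Your proposed conversion ``Choquard density $\to$ local $L^p$ via Hardy--Littlewood--Sobolev'' is the wrong direction: HLS gives only upper bounds on the convolution. One can salvage the step by the elementary pointwise lower bound $\int_{B_R}(I_\alpha*\chi_{B_R}|\cdot|^b|u|^p)|x|^b|u|^p\,dx\ge c\,R^{2b+\alpha-N}\big(\int_{B_R}|u|^p\big)^2$, using $|x-y|\le 2R$ and $|x|^b\ge R^b$ on $B_R$ (since $b<0$); but the paper's route through the Lebesgue norm avoids this detour entirely. Finally, the condition $4+\alpha+2b-N>0$ in \eqref{cnd} is part of the well-posedness setup from \cite{saa} rather than the decisive input for the exterior Morawetz errors, which in the paper are absorbed via Strauss decay and the uniform $H^1$ bound together with $p_*<p<p^*$.
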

\begin{rems}
\begin{enumerate}
\item[1.]
The global existence of energy solutions to \eqref{S} was proved in \cite{saa};
\item[2.]
the radial assumption allows to use the following Strauss inequality, which is available for any $u\in H^1(\R^N)$ such that $N\geq2$,
$$|\cdot|^{\frac{N-1}2}|u|\leq C_N\|u\|_{H^1},\quad\mbox{almost everywhere};$$
\item[3.] the restriction $N\geq3$ is needed in the proof of the scattering criterion, precisely when using the dispersion of the free Schro\"odinger kernel.
\end{enumerate}
\end{rems}
In order to prove the scattering, one needs the following scattering criterion \cite{tao}.
\begin{prop}\label{crt}
Take $N\geq3$, $b,\alpha$ satisfying \eqref{cnd}, $p_*<p<p^*$ such that $p\geq2$ and $u\in C(\R,H^1_{rd})$ be a global radial solution to \eqref{S}. Assume that 
$$0<\sup_{t\geq0}\|u(t)\|_{H^1}:=E<\infty.$$
There exist $R,\epsilon>0$ depending on $E,N,p,b,\alpha$ such that if
$$\liminf_{t\to+\infty}\int_{|x|<R}|u(t,x)|^2\,dx<\epsilon,$$
then, $u$ scatters for positive time. 
\end{prop}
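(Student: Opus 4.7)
The proof proceeds along the lines of Dodson--Murphy \cite{dm}. I first reduce the scattering statement to a smallness estimate on the linear free evolution at a well-chosen time. Writing Duhamel from time $T$,
\begin{equation*}
u(t)=e^{i(t-T)\Delta}u(T)+i\int_T^t e^{i(t-s)\Delta}\mathcal N(u(s))\,ds,
\end{equation*}
and applying the dual Strichartz estimate for the source term $\mathcal N$ (provided by the Appendix), a standard bootstrap of the Strichartz norm $\|u\|_{L^q_tL^r_x([T,\infty))}$ shows that this norm is finite, and so $u$ scatters on $[T,\infty)$, provided
\begin{equation*}
\|e^{i(\cdot-T)\Delta}u(T)\|_{L^q_tL^r_x([T,\infty))}\le\eta
\end{equation*}
for a suitable admissible pair $(q,r)$ and some small $\eta=\eta(E,N,p,b,\alpha)$. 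Finite Strichartz norm in turn yields $u_+\in H^1$ with $u(t)-e^{it\Delta}u_+\to 0$ in $H^1$. The task therefore reduces to constructing such a time $T$.

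I invoke the hypothesis to pick $T$ with $\int_{|x|<R}|u(T,x)|^2\,dx<\epsilon$ and split $u(T)=v_1+v_2$ with $v_1=\psi_R u(T)$ and $v_2=(1-\psi_R)u(T)$. On the one hand, $v_1$ has small $L^2$ mass (\,$\|v_1\|_2^2\le\epsilon$\,), so Strichartz directly yields $\|e^{i(\cdot-T)\Delta}v_1\|_{L^q_tL^r_x([T,\infty))}\lesssim\sqrt\epsilon$. On the other hand, $v_2$ is supported in $\{|x|>R/2\}$ and the radial Strauss inequality $|u(T,x)|\lesssim |x|^{-(N-1)/2}E$ gives $\|v_2\|_{L^{r'}}\lesssim R^{-\delta(r')}E$ with $\delta(r')>0$ for any $r'>2$; injecting this into the dispersive estimate
$$\|e^{i(t-T)\Delta}v_2\|_{L^r}\lesssim (t-T)^{-N(\frac12-\frac1r)}\|v_2\|_{L^{r'}}$$
and integrating in time over the long portion $[T+1,\infty)$, together with a matching Strichartz bound on the short interval $[T,T+1]$ obtained by interpolating between the $L^2$ and $L^{r'}$ norms of $v_2$, yields arbitrary smallness as $R\to\infty$, provided the admissible pair $(q,r)$ satisfies $qN(\tfrac12-\tfrac1r)>1$. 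This latter integrability condition is the reason why the restriction $N\ge 3$ is imposed.

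The principal obstacle is the nonlinear Strichartz bound for $\mathcal N=(I_\alpha *|\cdot|^b|u|^p)|x|^b|u|^{p-2}u$ used in the bootstrap of the first paragraph. The simultaneous presence of the Riesz convolution and the singular inhomogeneity $|x|^b$ (with $b<0$) requires a weighted Hardy--Littlewood--Sobolev-type estimate, and its compatibility with the intercritical scaling $0<s_c<1$ is guaranteed by the sharp conditions of \eqref{cnd}, in particular by $4+\alpha+2b-N>0$. Once the nonlinear Strichartz estimate of the Appendix is granted, the scattering criterion follows from the splitting argument above by choosing first $\eta$ from the bootstrap threshold, then $\epsilon$ small enough, and finally $R$ large enough.
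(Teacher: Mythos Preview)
Your spatial splitting $u(T)=\psi_R u(T)+(1-\psi_R)u(T)=:v_1+v_2$ does not deliver the smallness of $e^{i(\cdot-T)\Delta}v_2$ in any Strichartz norm. The step ``$\|v_2\|_{L^{r'}}\lesssim R^{-\delta(r')}E$ for $r'>2$, inject this into the dispersive estimate'' is inconsistent: the $L^{r'}\to L^r$ dispersive estimate requires $r'<2$ (since $r>2$ for a nontrivial admissible pair), whereas the Strauss decay $|v_2(x)|\lesssim |x|^{-(N-1)/2}E$ together with $\|v_2\|_2\lesssim E$ only controls $\|v_2\|_{L^p}$ for $p\in[2,\infty]$. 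There is no reason for $v_2$ to lie in $L^p$ for any $p<2$, so the dispersive decay is simply unavailable for $v_2$. Equivalently, $\|v_2\|_{\dot H^{s_c}}$ is of order $E$, not small, and Strichartz alone gives no smallness for its free evolution on $[T,\infty)$.

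The paper (following Dodson--Murphy \cite{dm}) proceeds by a \emph{time} splitting of the Duhamel formula, not a spatial splitting of the data. One writes
\[
e^{i(\cdot-T)\Delta}u(T)=e^{i\cdot\Delta}u_0+i\int_{J_1}e^{i(\cdot-s)\Delta}\mathcal N\,ds+i\int_{J_2}e^{i(\cdot-s)\Delta}\mathcal N\,ds,
\]
with $J_1=[T-\epsilon^{-\beta},T]$ and $J_2=[0,T-\epsilon^{-\beta}]$. The linear piece $e^{i\cdot\Delta}u_0$ is small on $[T,\infty)$ for $T$ large. The $J_1$ contribution is small because the local-mass hypothesis, propagated backward over the short window $J_1$ via $|\partial_t\int\psi_R|u|^2\,dx|\lesssim R^{-1}$, makes $\|u\|_{L^\infty(J_1,L^r)}$ small through interpolation and Strauss, and this feeds into the nonlinear Strichartz bound for $\mathcal N$. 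The $J_2$ contribution is handled by the $L^1\to L^\infty$ dispersive estimate applied to the \emph{source} $\mathcal N(s)$, which \emph{does} belong to $L^\infty_tL^1_x$ by Hardy--Littlewood--Sobolev and the uniform $H^1$ bound; the gap $|t-s|\ge\epsilon^{-\beta}$ for $t\ge T$, $s\in J_2$ then produces the required decay (this is also where $N\ge3$ enters). The essential mechanism you are missing is that the dispersive estimate is applied to $\mathcal N$ inside Duhamel, where an $L^1_x$ bound is available, and not to a piece of the data $u(T)$, where it is not.
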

The following Morawetz estimate stand for a standard tool to prove scattering.
\begin{prop}\label{cr}
Take $N\geq3$, $b,\alpha$ satisfying \eqref{cnd}, $p_*<p<p^*$ such that $p\geq2$ and $u\in C(\R,H^1_{rd})$ be a global solution to \eqref{S}. Then, for $T>0$, $\delta>0$ (given by Lemma \ref{bnd}) and $R:=R(\delta,M(u),\phi)$ large enough, holds 
$$\frac1T\int_0^T\Big(\int_{|x|<R}|u(t,x)|^{\frac{2NP}{\alpha+N}}\Big)^{\frac{\alpha+N}{Np}}\,dt\lesssim \frac RT+R^{-\frac{(N-1)B}N}.$$
\end{prop}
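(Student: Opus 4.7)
\medskip
\noindent\textbf{Proof plan.} The strategy adapts the Morawetz method of \cite{dm} to the Choquard setting, combining a localized virial identity with the variational coercivity available to $u$ in the sub-threshold regime. First, I would differentiate the Morawetz action $M_a(t)=2\Im\int\nabla a\cdot\nabla u\,\bar u\,dx$ using the variance-type identity established in the Appendix; the output has the schematic form
\[
\tfrac{d}{dt}M_a \;=\; 4\int a_{jk}\,\overline{\partial_j u}\,\partial_k u\,dx \;-\; \int \Delta^2 a\,|u|^2\,dx \;+\; \mathcal{P}_a(u),
\]
where $\mathcal{P}_a(u)$ collects the nonlinear contribution pairing $\nabla a$ with the Choquard nonlocal term $\mathcal{N}$. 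I would then decompose the integration domain into the inner ball $\{|x|\leq R/2\}$, the transition annulus $\{R/2\leq|x|\leq R\}$, and the exterior $\{|x|>R\}$ and exploit the explicit form of $a$ in each region.

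On the inner ball, $a_{jk}=\delta_{jk}$ and $\Delta^2 a\equiv 0$, so the kinetic part reduces to $4\|\nabla u\|_{L^2(|x|<R/2)}^2$ and a direct computation of $\mathcal{P}_a$ produces a multiple of the localized Pohozaev functional $\mathcal{I}(u)$ (with the constants $A,B$ of Proposition~\ref{gag}). Invoking the energy-trapping estimate of Lemma~\ref{bnd}, which in the sub-threshold regime provides a uniform-in-$t$ coercive lower bound of the form $\mathcal{I}(u(t))\geq\delta\cdot(\text{nonlinear potential})$, this bulk term is bounded from below by a constant multiple of
\[
\delta\Big(\int_{|x|<R/2}|u(t,x)|^{\frac{2Np}{\alpha+N}}\,dx\Big)^{\frac{\alpha+N}{Np}},
\]
after a Hardy--Littlewood--Sobolev comparison that absorbs the weight $|x|^b$. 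In the annulus and the exterior, every surviving piece of $\tfrac{d}{dt}M_a$ is an error: using $|\partial^\gamma a|\lesssim R|x|^{1-|\gamma|}$ together with the radial Strauss inequality $|u(x)|\leq C_N|x|^{-(N-1)/2}\|u\|_{H^1}$, valid under the standing assumption $u\in H^1_{rd}$, I would majorize these errors by $CR^{-(N-1)B/N}\sup_t\|u(t)\|_{H^1}^{2p}$. The exponent $(N-1)B/N$ arises by distributing the Strauss decay across the $2p$ powers of $u$ in $\mathcal{N}$ (modulo the weights $|x|^b$), while \eqref{cnd} guarantees the convergence of all tail integrals; for the nonlocal tail one further splits the convolution variable $y$ into $\{|y|<R/2\}$ and $\{|y|>R/2\}$ and applies HLS on each piece.

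To finish, the boundary term obeys $|M_a(T)-M_a(0)|\leq 4\|\nabla a\|_\infty\sup_t\|u(t)\|_{H^1}^2\lesssim R$ thanks to the uniform $H^1$ bound inherited from $\mathcal{M}\mathcal{E}(u_0)<1$. Integrating the virial identity over $[0,T]$ and dividing by $T$ then yields exactly $R/T+R^{-(N-1)B/N}$. The main obstacle I foresee is the bridging step in the second paragraph: the nonlocality of $I_\alpha$ prevents a clean restriction of $\mathcal{P}_a$ to $\{|x|<R/2\}$, so one must simultaneously handle the inhomogeneous weight $|x|^b$ and the Riesz convolution in order to turn the coercive Morawetz output into the $L^{2Np/(\alpha+N)}$ bound with precisely the exponent stated in the proposition, and to certify that the constants do not blow up with $R$.
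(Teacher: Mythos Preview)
Your plan matches the paper's proof: localized virial via Proposition~\ref{vrnc}, the three-region split, Strauss decay for the tails, and the $|M_a|\lesssim R$ boundary bound integrated over $[0,T]$. Two refinements are worth flagging. First, the coercivity you need is not the global Lemma~\ref{bnd} for $u$ but its localized consequence Lemma~\ref{crcv} for the truncated function $\psi_R u$: the inner-ball output of $M_a'$ is $8\|\nabla(\psi_R u)\|^2-\tfrac{4B}{p}\int(I_\alpha*|\cdot|^b|\psi_Ru|^p)|x|^b|\psi_Ru|^p\,dx$ up to $O(R^{-2})$, and Lemma~\ref{crcv} is exactly what bounds this below by $\delta'\|\psi_R u\|_{2Np/(N+\alpha+2b)}^2$. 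Second, the ``bridging step'' you correctly identify is resolved in the paper by symmetrizing the nonlocal virial term as $\tfrac12\int\!\!\int(\nabla a(x)-\nabla a(y))\cdot(x-y)\,\tfrac{I_\alpha(x-y)}{|x-y|^2}\,|y|^b|u(y)|^p|x|^b|u(x)|^p\,dx\,dy$; on $\{|x|,|y|<R/2\}$ this collapses to the clean localized Choquard potential of $\psi_R u$, and the cross and outer pieces are then estimated exactly as you describe via Strauss and Hardy--Littlewood--Sobolev to produce the $R^{-(N-1)B/N}$ error.
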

\subsection{Useful estimates}
Recall the so-called Strichartz estimate \cite{gw}.
\begin{defi}
Take $N\geq3$ and $s\in[0,1)$. A couple of real numbers $(q,r)$ is said to be $s$-admissible if 
$$\frac{2N}{N-2s}\leq r<\frac{2N}{N-2},\quad2\leq q,r\leq\infty\quad\mbox{and}\quad N(\frac12-\frac1r)=\frac{2}q+s.$$
Denote the set of $s$-admissible pairs by $\Gamma_s$, $\Gamma:=\Gamma_0$ and $(q,r)\in\Gamma'_s$ if $(q',r')\in\Gamma_s$.
\end{defi}
\begin{prop}\label{prop2}
Let $N \geq3$, $s\in[0,1)$, $b,\alpha$ satisfying \eqref{cnd}, $p_*<p<p^*$ and $u_0\in\dot H^s$. Then,
$$\sup_{(q,r)\in\Gamma_s}\|u\|_{L^q(L^r)}\lesssim\|u_0\|_{\dot H^s}+\inf_{(q',r')\in\Gamma'_{-s}}\|i\dot u+\Delta  u\|_{L^{ q'}(L^{ r'})};$$
\end{prop}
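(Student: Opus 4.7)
\emph{Plan.} This is a standard Kato--Strichartz estimate; I would reduce the full claim to the classical $L^2$-based Strichartz theorem of Keel--Tao and then trade regularity against integrability via Sobolev embedding to cover $s\in[0,1)$.

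First, by Duhamel's formula write
$$u(t)=e^{it\Delta}u_0-i\int_0^t e^{i(t-\tau)\Delta}\bigl(i\dot u+\Delta u\bigr)(\tau)\,d\tau,$$
so the estimate splits into a homogeneous term and a retarded inhomogeneous term, which I would treat separately. For the homogeneous term, I would combine the unitarity $\|e^{it\Delta}\|_{2\to 2}=1$ with the dispersive bound $\|e^{it\Delta}\|_{1\to\infty}\lesssim |t|^{-N/2}$ and apply the Keel--Tao $TT^*$ argument to obtain every $L^2$-admissible Strichartz estimate $\|e^{it\Delta}v\|_{L^q L^{\tilde r}}\lesssim\|v\|_2$ with $(q,\tilde r)\in\Gamma_0$. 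Given any target pair $(q,r)\in\Gamma_s$, I would set $\tilde r$ through $\tfrac1{\tilde r}=\tfrac1r+\tfrac sN$; a direct check shows $N(\tfrac12-\tfrac1{\tilde r})=\tfrac2q$, so $(q,\tilde r)\in\Gamma_0$. Applying the $L^2$-Strichartz to $|\nabla|^s u_0$ and invoking the Sobolev embedding $\dot W^{s,\tilde r}\hookrightarrow L^r$ then yields the homogeneous half of the bound.

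For the inhomogeneous part I would use the bilinear $TT^*$ scheme, supplemented with the Christ--Kiselev lemma when the pair $(q,r)$ on the left and $(q'_1,r'_1)$ on the right are not equal. Concretely, the dual homogeneous Strichartz gives $\|\int e^{-i\tau\Delta}F(\tau)\,d\tau\|_{L^2_x}\lesssim\|F\|_{L^{q'_1}L^{\tilde r'_1}}$ for $(q_1,\tilde r_1)\in\Gamma_0$; composing with the homogeneous estimate above and invoking the dual Sobolev embedding $L^{r'_1}\hookrightarrow\dot W^{-s,\tilde r'_1}$ with $\tfrac1{\tilde r'_1}=\tfrac1{r'_1}-\tfrac sN$ delivers the claim for arbitrary $(q'_1,r'_1)\in\Gamma'_{-s}$. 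Taking the supremum on the left over $(q,r)\in\Gamma_s$ and the infimum on the right over $(q'_1,r'_1)\in\Gamma'_{-s}$ concludes.

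The main obstacle is bookkeeping: one must verify that the admissibility sets $\Gamma_s$ and $\Gamma'_{-s}$ are calibrated exactly so that the shift by $s/N$ in the spatial exponent lands inside the Keel--Tao $L^2$-admissible range. The restriction $\tfrac{2N}{N-2s}\le r<\tfrac{2N}{N-2}$ in the definition of $\Gamma_s$ is precisely what forces $\tilde r\in[2,\tfrac{2N}{N-2})$, keeping us away from the forbidden Knapp endpoint while staying inside the range where Sobolev embedding applies; analogously on the dual side. The endpoint cases ($q=2$ in dimension $N\geq 3$) are handled by the endpoint Keel--Tao estimate, and the non-diagonal inhomogeneous retarded estimate is handled by Christ--Kiselev, so no additional obstruction arises.
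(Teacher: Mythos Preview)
The paper does not supply its own proof of this proposition: it is simply recalled from the literature with the reference \cite{gw} (``Recall the so-called Strichartz estimate \cite{gw}''), so there is no argument in the paper to compare against. Your sketch is the standard route to such $\dot H^s$-level Strichartz estimates---reduce to the Keel--Tao $L^2$ theory and trade $s$ derivatives for integrability via Sobolev embedding on both the direct and dual sides---and the exponent bookkeeping you describe (checking that $\tfrac1{\tilde r}=\tfrac1r+\tfrac sN$ puts $(q,\tilde r)$ back in $\Gamma_0$, and likewise on the dual side) is exactly the calibration that makes it work. So your proposal is correct and in line with how this result is obtained in the cited reference; the only remark is that the paper itself treats the proposition as a black box rather than proving it.
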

\begin{rem}\label{rm}
Take the inhomogeneous Schr\"odinger equation $i\dot u+\Delta u=h=h\chi_{|x|<1}+h\chi_{|x|\geq1}:=h_1+h_2$. Writing $u:=u_1+u_2$, where $i\dot u_i+\Delta u_i=h_i$, one gets the Strichartz estimate
$$\sup_{(q,r)\in\Gamma_s}\|u\|_{L^q_t(L^r)}\lesssim\|u_0\|_{\dot H^s}+\inf_{(q',r')\in\Gamma'_{-s}}\|h\|_{L^{ q'}_t(L^{ r'}(|x|<1))}+\inf_{(q',r')\in\Gamma'_{-s}}\|h\|_{L^{ q'}_t(L^{ r'}(|x|>1))}.$$
\end{rem}
The next variance identity is proved in the appendix.
\begin{prop}\label{vrnc}
Take $N\geq3$, $b,\alpha$ satisfying \eqref{cnd}, $p_*<p<p^*$ such that $p\geq2$ and $u\in C_T(H^1)$ be a local solution to \eqref{S}. Then, holds on $[0,T]$,
\begin{eqnarray*}
V''_a
&=&M_a'\\
&=&4\int_{\R^N}\partial_l\partial_ka\Re(\partial_ku\partial_l\bar u)\,dx-\int_{\R^N}\Delta^2a|u|^2\,dx\\
&+&2(\frac2p-1)\int_{\R^N}\Delta a(I_\alpha*|\cdot|^b|u|^{p})|x|^b|u|^p\,dx\\
&+&\frac{4b}p\int_{\R^N}x\nabla a|x|^{b-2}(I_\alpha*|\cdot|^b|u|^p)|u|^{p}\,dx\\
&+&\frac{4}p(\alpha-N)\int_{\R^N}\nabla a(\frac.{|\cdot|^2}I_\alpha*|\cdot|^b|u|^p)|x|^b|u|^{p}\,dx.
\end{eqnarray*}
\end{prop}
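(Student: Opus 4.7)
The plan is to compute the first two time derivatives of $V_a$ by differentiating under the integral sign, inserting $\dot u=i(\Delta u+\mathcal N)$ from \eqref{S}, and transferring derivatives off $u$ onto the smooth weight $a$ via integration by parts. For $V_a'$, I would write $V_a'=2\Im\int a\bar u(\Delta u+\mathcal N)\,dx$; since $\bar u\mathcal N=(I_\alpha*|\cdot|^b|u|^p)|x|^b|u|^p$ is real, the nonlinear contribution drops, and a single integration by parts in the Laplacian term recovers $M_a$. The main work is the evaluation of $M_a'$: differentiating $M_a=2\Im\int\nabla a\cdot\bar u\nabla u\,dx$ and substituting the equation, I would arrive at
$$M_a'=2\Re\int\nabla a\cdot\bigl[\bar u\,\nabla\Delta u-\Delta\bar u\,\nabla u\bigr]\,dx+2\Re\int\nabla a\cdot\bigl[\bar u\,\nabla\mathcal N-\bar{\mathcal N}\,\nabla u\bigr]\,dx,$$
and treat the two brackets separately.

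For the linear bracket I would use the divergence identity $\bar u\,\partial_j\Delta u-\Delta\bar u\,\partial_j u=\partial_k(\bar u\,\partial_j\partial_k u-\partial_k\bar u\,\partial_j u)$, integrate by parts once against $\partial_j a$, and combine with $2\Re(\bar u\,\partial_j\partial_k u)=\partial_j\partial_k|u|^2-2\Re(\partial_j u\,\partial_k\bar u)$, the symmetry of $a_{jk}$, and $\partial_j\partial_k a_{jk}=\Delta^2 a$ to recover the first two terms $4\int a_{jk}\Re(\partial_k u\,\partial_l\bar u)\,dx-\int\Delta^2 a\,|u|^2\,dx$. For the nonlinear bracket, three observations suffice: (i) $\bar u\mathcal N=K|x|^b|u|^p$ is real with $K:=I_\alpha*|\cdot|^b|u|^p$, which, together with $\Re(\bar{\mathcal N}\,\partial_j u)=\Re(\mathcal N\,\partial_j\bar u)$, gives $2\Re(\bar u\,\partial_j\mathcal N-\bar{\mathcal N}\,\partial_j u)=2\partial_j(K|x|^b|u|^p)-4\Re(\bar{\mathcal N}\,\partial_j u)$; (ii) the chain rule $\Re(|u|^{p-2}u\,\partial_j\bar u)=\tfrac{1}{p}\partial_j|u|^p$ reduces $\Re(\bar{\mathcal N}\,\partial_j u)$ to $\tfrac{1}{p}K|x|^b\partial_j|u|^p$; (iii) from $\nabla I_\alpha(x)=(\alpha-N)\tfrac{x}{|x|^2}I_\alpha(x)$ one obtains $\nabla K=(\alpha-N)\bigl(\tfrac{\cdot}{|\cdot|^2}I_\alpha\bigr)*(|\cdot|^b|u|^p)$. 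A single integration by parts against $\partial_j a$ and the product rule $\partial_j(\partial_j a\,K|x|^b)=\Delta a\,K|x|^b+\nabla a\cdot\nabla K\,|x|^b+b\,x\cdot\nabla a\,K|x|^{b-2}$ then collect the three nonlinear terms with coefficients $2(\tfrac{2}{p}-1)$, $\tfrac{4b}{p}$, and $\tfrac{4(\alpha-N)}{p}$ exactly as stated.

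I expect the main obstacle to be regularity: with only $u\in C_T(H^1)$, the second time derivative and the repeated integrations by parts above are not justified pointwise. I would carry out the computation rigorously for a smooth approximating sequence $u_n$ associated to regularized initial data in a denser subspace, use Hardy--Littlewood--Sobolev together with the range of $b$ and $\alpha$ imposed in \eqref{cnd} to ensure that each differentiated integrand belongs to $L^1$ uniformly in $n$, and then pass to the limit using the Strichartz regularity provided by Proposition~\ref{t0}(2) and continuous dependence on the data.
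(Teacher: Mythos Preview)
Your proposal is correct and follows essentially the same route as the paper's proof: direct time-differentiation of $V_a$ and $M_a$, substitution of $i\dot u=-\Delta u-\mathcal N$, and integration by parts, with the linear part handled via the standard divergence identity and the nonlinear part via the product rule together with $\nabla I_\alpha=(\alpha-N)\tfrac{x}{|x|^2}I_\alpha$. Your bookkeeping for the nonlinear bracket---using the realness of $\bar u\,\mathcal N$ to rewrite it as $\partial_j(K|x|^b|u|^p)-2\Re(\bar{\mathcal N}\,\partial_j u)$ before the chain rule $\Re(|u|^{p-2}u\,\partial_j\bar u)=\tfrac1p\partial_j|u|^p$---is a bit cleaner than the paper's, which first expands $\partial_k\mathcal N$ term by term and then simplifies; you also flag the regularity justification that the paper leaves formal.
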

Now, one gives some standard estimates independent of the Schr\"odinger equation \eqref{S}. Recall a Hardy-Littlewood-Sobolev inequality \cite{el}.
\begin{lem}\label{hls}
Let $0 <\lambda < N\geq1$ and $1<s,r<\infty$ be such that $\frac1r +\frac1s +\frac\lambda N = 2$. Then,
$$\int_{\R^N\times\R^N} \frac{f(x)g(y)}{|x-y|^\lambda}\,dx\,dy\leq C(N,s,\lambda)\|f\|_{r}\|g\|_{s},\quad\forall f\in L^r(\R^N),\,\forall g\in L^s(\R^N).$$
\end{lem}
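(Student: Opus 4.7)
The plan is to reduce the bilinear statement to a linear mapping property of the Riesz potential and then obtain it by a real-variable weak-type bound combined with Marcinkiewicz interpolation. More precisely, by the duality between $L^{s}$ and $L^{s'}$ (with $\frac{1}{s}+\frac{1}{s'}=1$), the bilinear inequality
\[
\int_{\R^N\times\R^N}\frac{f(x)g(y)}{|x-y|^{\lambda}}\,dx\,dy\le C\|f\|_{r}\|g\|_{s}
\]
is equivalent to showing that the operator $T_\lambda f(x):=\int_{\R^N}|x-y|^{-\lambda}f(y)\,dy$ satisfies $\|T_\lambda f\|_{s'}\le C\|f\|_{r}$. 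The exponent relation $\frac{1}{r}+\frac{1}{s}+\frac{\lambda}{N}=2$ rewrites as $\frac{1}{s'}=\frac{1}{r}-\frac{N-\lambda}{N}$, which is exactly the scaling forced by dilations $f(x)\mapsto f(tx)$, so this is the only possible admissible exponent.

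Next I would prove the weak-type estimate $|\{x:|T_\lambda f(x)|>t\}|\lesssim(\|f\|_r/t)^{s'}$. The trick is to split the kernel $K(x)=|x|^{-\lambda}$ at a threshold $A>0$ to be chosen: write $K=K_1+K_2$ with $K_1=K\chi_{\{|x|\le A\}}$ and $K_2=K\chi_{\{|x|>A\}}$. A direct computation in polar coordinates shows $K_1\in L^1$ with $\|K_1\|_1\simeq A^{N-\lambda}$, while $K_2\in L^{r'}$ with $\|K_2\|_{r'}\simeq A^{-\lambda+N/r'}$ provided $\lambda r'>N$, which holds for $1<r<N/\lambda$. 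Young's inequality then gives $\|K_1*f\|_r\lesssim A^{N-\lambda}\|f\|_r$ and $\|K_2*f\|_\infty\lesssim A^{-\lambda+N/r'}\|f\|_r$. Choosing $A$ so that the $L^\infty$-piece contributes less than $t/2$ and then using Chebyshev on the $L^r$-piece yields the desired weak-type bound.

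From here Marcinkiewicz interpolation between two nearby exponents $r_0,r_1$ (with corresponding $s_0',s_1'$) promotes the weak-type estimate to the strong-type bound $\|T_\lambda f\|_{s'}\lesssim\|f\|_r$, which is what we wanted. The hypothesis $1<r,s<\infty$ keeps us safely away from the endpoints where the inequality actually fails (e.g.\ $r=1$ or $s=1$), so the interpolation step has room to operate.

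The main delicate point in this argument is the optimization of the splitting parameter $A$: it must be chosen as a function of both $t$ and $\|f\|_r$ so that the two pieces contribute comparably, and the exponents $\frac{1}{r}+\frac{\lambda}{N}-1=\frac{1}{s'}$ appear precisely because this optimization is only possible at the scaling-determined exponent. Everything else (Young, H\"older, Chebyshev, Marcinkiewicz) is standard. A sharper and more elegant route, which I would mention only as an alternative, uses Riesz rearrangement to reduce to radial decreasing $f,g$ and then to a one-dimensional integral, but this is heavier machinery than needed since the optimal constant is irrelevant for the present purposes.
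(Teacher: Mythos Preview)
The paper does not actually prove this lemma: it is stated as a recalled result with a citation to Lieb's \emph{Analysis} \cite{el}, so there is no ``paper's own proof'' to compare against. Your sketch is a correct and standard route to the Hardy--Littlewood--Sobolev inequality (duality to reduce to boundedness of the Riesz potential, kernel splitting to obtain a weak-type bound, then Marcinkiewicz interpolation). One small slip: the condition $\lambda r'>N$ is equivalent to $r<\frac{N}{N-\lambda}$, not $r<\frac{N}{\lambda}$; but this is exactly the constraint $\frac{1}{s'}=\frac{1}{r}-\frac{N-\lambda}{N}>0$ forced by $s>1$, so the hypothesis is automatically available and the argument goes through unchanged.
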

Let us give a useful consequence \cite{st}.
\begin{cor}\label{cor}\label{lhs2}
Let $0 <\lambda < N\geq1$ and $1<s,r,q<\infty$ be such that $\frac1q+\frac1r+\frac1s=1+\frac\alpha N$. Then,
$$\|(I_\alpha*f)g\|_{r'}\leq C(N,s,\alpha)\|f\|_{s}\|g\|_{q},\quad\forall f\in L^s({\R^N}), \,\forall g\in L^q({\R^N}).$$
\end{cor}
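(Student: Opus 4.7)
The corollary is a standard manipulation of the classical Hardy--Littlewood--Sobolev inequality stated just above it as Lemma~\ref{hls}, combined with duality and H\"older. I would proceed as follows.

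\textbf{Step 1 (Duality).} Since $1 < r < \infty$ by hypothesis, one has $r'\in(1,\infty)$ and
$$\|(I_\alpha * f)g\|_{r'} = \sup_{\|h\|_r\le 1}\left|\int_{\R^N}(I_\alpha * f)(x)\,g(x)\,h(x)\,dx\right|.$$
Fix any $h$ with $\|h\|_r\le 1$ and unfold the convolution using the definition $I_\alpha(x)=c_{N,\alpha}|x|^{-(N-\alpha)}$:
$$\int_{\R^N}(I_\alpha*f)(x)\,g(x)\,h(x)\,dx \;=\; c_{N,\alpha}\int_{\R^N}\!\!\int_{\R^N}\frac{f(y)\,(gh)(x)}{|x-y|^{N-\alpha}}\,dy\,dx.$$

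\textbf{Step 2 (H\"older on the product $gh$).} Define the exponent $t$ by $\frac1t=\frac1q+\frac1r$. Then H\"older's inequality gives $\|gh\|_t\le\|g\|_q\|h\|_r$.

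\textbf{Step 3 (Apply HLS).} Apply Lemma~\ref{hls} to the double integral with $\lambda:=N-\alpha\in(0,N)$, with $f\in L^s$ and $gh\in L^t$. The admissibility condition of Lemma~\ref{hls} is
$$\frac1s+\frac1t+\frac{\lambda}{N}=2 \iff \frac1s+\frac1q+\frac1r=1+\frac{\alpha}{N},$$
which is exactly the hypothesis of the corollary. Hence
$$\left|\iint\frac{f(y)\,(gh)(x)}{|x-y|^{N-\alpha}}\,dy\,dx\right|\;\lesssim\;\|f\|_s\,\|gh\|_t\;\lesssim\;\|f\|_s\,\|g\|_q\,\|h\|_r.$$

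\textbf{Step 4 (Conclude).} Taking the supremum over $\|h\|_r\le 1$ yields the claimed estimate with $C(N,s,\alpha)$ depending only on the dimension, the HLS constant and $c_{N,\alpha}$.

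The only subtle point—the main obstacle, such as it is—is checking that the intermediate exponent $t$ defined by $\frac1t=\frac1q+\frac1r$ lies in the range $(1,\infty)$ required by both H\"older and Lemma~\ref{hls}. The relation $\frac1s+\frac1t=1+\frac\alpha N$ together with $s>1$ gives $\frac1t<1+\frac\alpha N$, and the finiteness of $s$ gives $\frac1t>\frac\alpha N>0$; combined with the implicit standing assumption that the data put $gh$ in an honest Banach Lebesgue space, this places $t$ in the admissible range, so Steps~2--3 apply without modification.
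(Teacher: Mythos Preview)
Your argument is correct and is exactly the standard route: duality, H\"older on $gh$, then Lemma~\ref{hls} with $\lambda=N-\alpha$. The paper does not supply its own proof of this corollary (it simply cites \cite{st}), so there is nothing to compare against beyond noting that your derivation is the expected one.

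One small remark on your final paragraph: the bound $\frac1t<1+\frac\alpha N$ is not what forces $t>1$; what you actually need is $\frac1t=1+\frac\alpha N-\frac1s<1$, i.e.\ $s<\frac N\alpha$. This extra constraint is not written in the hypotheses of the corollary as stated, but it is the genuine range for HLS and is satisfied in every application made later in the paper. Your phrase ``implicit standing assumption'' is covering precisely this point; it would be cleaner to name the condition $s<N/\alpha$ explicitly rather than leave it implicit.
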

Recall some Sobolev injections \cite{co}.
\begin{lem}\label{sblv}
Let $N\geq1$, then 
\begin{enumerate}
\item[1.]
$H^1\hookrightarrow L^q$ for any $q\in[2,\frac{2N}{N-2}]$ if $N\geq3$ and for any $q\in[2,\infty)$ if $N\leq2$;
\item[2.]
if $N\geq3$ and $2\leq p\leq\frac{2N}{N-2}$ or $N\leq2$ and $2\leq q<\infty$, so
$$\|u\|_{{p}}\lesssim\|u\|^{1-N(\frac12-\frac1p)}\|\nabla u\|^{N(\frac12-\frac1p)},\quad \forall u\in H^1(\R^N).$$
\end{enumerate}
\end{lem}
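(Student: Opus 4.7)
The plan is to deduce both assertions from the sharp Gagliardo--Nirenberg interpolation combined with the critical Sobolev embedding, following the classical approach laid out in \cite{co}.

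First I would establish the endpoint embedding $\dot H^1(\R^N)\hookrightarrow L^{2^*}(\R^N)$ for $N\geq 3$, where $2^*:=\frac{2N}{N-2}$, which is the usual Gagliardo--Nirenberg--Sobolev inequality, proved e.g.\ by a Riesz-potential representation of $u$ and the Hardy--Littlewood--Sobolev estimate (which is already available as Lemma \ref{hls} above). This provides the only nontrivial analytic input.

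Next, for $N\geq 3$ and $2\leq p\leq 2^*$, I would determine $\theta\in[0,1]$ by the convex-combination identity
\begin{equation*}
\frac1p=\frac{\theta}{2}+\frac{1-\theta}{2^*},
\end{equation*}
a direct computation yielding $\theta=1-N(\tfrac12-\tfrac1p)$ and $1-\theta=N(\tfrac12-\tfrac1p)$. Then H\"older's inequality gives $\|u\|_p\leq \|u\|_2^{\theta}\|u\|_{2^*}^{1-\theta}$, and bounding the second factor by $\|\nabla u\|$ via the endpoint embedding yields the Gagliardo--Nirenberg estimate of part 2. For $N\leq 2$ the same scheme works: one uses the standard Gagliardo--Nirenberg inequality $\|u\|_q\lesssim\|u\|^{1-N(\frac12-\frac1q)}\|\nabla u\|^{N(\frac12-\frac1q)}$, valid for all $q\in[2,\infty)$, proved by extending $u$ to higher dimension or by a Littlewood--Paley argument; again a reference to \cite{co} suffices.

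Part 1 is then an immediate corollary of part 2: since the right-hand side of the Gagliardo--Nirenberg inequality is bounded by $\|u\|_{H^1}^{\theta}\|u\|_{H^1}^{1-\theta}=\|u\|_{H^1}$, the inclusion $H^1\hookrightarrow L^q$ follows for the indicated range of $q$. The only nonroutine ingredient is the endpoint Sobolev inequality $\dot H^1\hookrightarrow L^{2^*}$; since this is classical and cited from \cite{co}, there is no genuine obstacle, and the proof reduces essentially to verifying the interpolation exponent by hand.
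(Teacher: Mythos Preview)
The paper does not actually prove this lemma: it is stated as a recalled fact with the preamble ``Recall some Sobolev injections \cite{co}'' and no argument is given. Your sketch is the standard textbook route (endpoint Sobolev $\dot H^1\hookrightarrow L^{2^*}$ plus H\"older interpolation to recover the intermediate exponents, then part~1 as a trivial consequence of part~2), and it is correct; there is nothing to compare against in the paper itself.
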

The following interpolation estimate will be useful \cite{nir}.
\begin{prop}
Let $1\leq q,r\leq\infty$ and $0\leq s<m$. Then,
$$\|(-\Delta)^{\frac s2}\cdot\|_p\lesssim \|(-\Delta)^{\frac m2}\cdot\|_r^\theta\|\cdot\|_q^{1-\theta},$$
for any $\theta\in[\frac ms,1]$ satisfying
$$\frac1p=\frac sN+\theta(\frac12-\frac1N)+\frac{1-\theta}q.$$
\end{prop}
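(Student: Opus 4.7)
The plan is to prove this classical fractional Gagliardo--Nirenberg interpolation via a Littlewood--Paley / Bernstein argument, which is the cleanest route at this level of generality. By density it suffices to treat Schwartz functions $u\in\mathcal{S}(\R^N)$. Fix a dyadic Littlewood--Paley decomposition $u = P_{\le J}u + \sum_{j>J}P_j u$, where $P_j$ restricts Fourier support to $|\xi|\sim 2^j$ and $J\in\Z$ is a threshold to be optimized at the end.

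For the low-frequency piece, I would combine the frequency-localized identity $\|(-\Delta)^{s/2}P_{\le J}v\|_p\lesssim 2^{sJ}\|P_{\le J}v\|_p$ with Bernstein's $L^q\to L^p$ inequality (valid since $q\le p$ along the admissible range) to get
\[
\|(-\Delta)^{s/2}P_{\le J}u\|_p \;\lesssim\; 2^{J\left[s+N(\frac1q-\frac1p)\right]}\,\|u\|_q.
\]
For the high-frequency piece, factor out $m$ derivatives (the multiplier $|\xi|^{s-m}$ has size $\sim 2^{j(s-m)}$ on the $j$-th annulus) and then apply Bernstein from $L^r$ to $L^p$:
\[
\sum_{j>J}\|(-\Delta)^{s/2}P_j u\|_p \;\lesssim\; \sum_{j>J}2^{j\left[s-m+N(\frac1r-\frac1p)\right]}\|(-\Delta)^{m/2}u\|_r,
\]
a geometric series dominated by its first term provided the bracketed exponent is negative. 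Choosing $J$ so that the two contributions balance yields the claimed bound, and the exponent $\theta$ that comes out of the optimization is exactly the one pinned down by the scaling identity $\frac1p=\frac{s}{N}+\theta(\frac12-\frac1N)+\frac{1-\theta}{q}$ stated in the proposition.

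The main obstacle is the admissible range of $\theta$: convergence of the high-frequency geometric series forces $\theta>s/m$, while the low-frequency one requires $\theta<1$, so the stated interval $[m/s,1]$ should actually read $[s/m,1]$ (a likely typo, since $s<m$ otherwise makes $[m/s,1]$ empty). The endpoints $\theta\in\{s/m,1\}$ and the extremes $q,r\in\{1,\infty\}$ are the usual delicate points, handled either by a Besov refinement of Bernstein or, more elegantly, by complex interpolation: the identity map is bounded $[\dot W^{m,r},L^q]_\theta\hookrightarrow \dot W^{s,p}$ under the stated dimensional constraint, which gives the inequality directly and sidesteps the endpoint bookkeeping altogether.
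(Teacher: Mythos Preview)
The paper does not actually prove this proposition: it is quoted as a classical interpolation inequality with a bare citation to Nirenberg's 1959 paper, so there is no in-paper argument to compare yours against. Your Littlewood--Paley/Bernstein route is a standard modern proof of Gagliardo--Nirenberg and is quite different from Nirenberg's original method (iterated one-dimensional integral estimates and H\"older); either approach is acceptable here.

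Your diagnosis of the typo is correct: with $0\le s<m$ the stated interval $[\tfrac ms,1]$ is empty and must be $[\tfrac sm,1]$. You should also flag that the printed scaling constraint $\tfrac1p=\tfrac sN+\theta(\tfrac12-\tfrac1N)+\tfrac{1-\theta}q$ cannot be right in this generality, since it contains neither $r$ nor $m$; the balance your optimization will actually produce is $\tfrac1p-\tfrac sN=\theta\bigl(\tfrac1r-\tfrac mN\bigr)+(1-\theta)\tfrac1q$, and you should say so rather than assert that the printed relation drops out. One technical point to tighten: your parenthetical ``valid since $q\le p$ along the admissible range'' is not automatic across the full Gagliardo--Nirenberg parameter set, and the companion Bernstein step $L^r\to L^p$ on the high-frequency piece likewise needs $r\le p$. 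In regimes where one of these fails the dyadic sum has to be rerouted (e.g.\ through the $L^p$ square function or by interpolating the already-proved cases), so your closing remark that complex interpolation between $\dot W^{m,r}$ and $L^q$ handles everything uniformly is the right safety net---just make it the argument rather than an afterthought.
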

The next abstract result ends this section.
\begin{lem}\label{abs}
Let $T>0$ and $X\in C([0,T],\R_+)$ such that $$X\leq a+bX^{\theta}\mbox{ on } [0,T],$$
where $a$, $b>0$, $\theta>1$, $a<(1-\frac{1}{\theta})(\theta b)^{\frac{1}{1-\theta}}$ and $X(0)\leq (\theta b)^{\frac{1}{1-\theta}}$. Then
$$X\leq\frac{\theta}{\theta -1}a \mbox{ on } [0,T].$$
\end{lem}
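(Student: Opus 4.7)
The plan is to run a continuity (bootstrap) argument on the auxiliary function $\phi(x):=x-a-bx^{\theta}$, which rewrites the hypothesis as $\phi(X(t))\leq 0$ on $[0,T]$, with the additional input that $X(0)\leq x_0:=(\theta b)^{1/(1-\theta)}$.

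First I would analyze the shape of $\phi$ on $[0,\infty)$. Since $\phi'(x)=1-\theta b\, x^{\theta-1}$, the function $\phi$ is strictly increasing on $[0,x_0]$ and strictly decreasing on $[x_0,\infty)$; moreover $\phi(0)=-a<0$, $\phi(x)\to-\infty$ as $x\to\infty$, and a direct computation gives
\[
\phi(x_0)=x_0-a-bx_0^{\theta}=\Bigl(1-\tfrac{1}{\theta}\Bigr)(\theta b)^{1/(1-\theta)}-a,
\]
which is strictly positive by the standing assumption on $a$. Consequently $\phi$ has exactly two positive zeros $x_-<x_0<x_+$, and the sub-level set $\{x\geq 0:\phi(x)\leq 0\}$ splits as the disjoint union $[0,x_-]\cup[x_+,\infty)$.

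Next, since $X(0)\leq x_0<x_+$ and $\phi(X(0))\leq 0$, necessarily $X(0)\in[0,x_-]$. Because $X\in C([0,T],\R_+)$ takes values in the union of the two disjoint closed intervals $[0,x_-]$ and $[x_+,\infty)$, which are separated by the open interval $(x_-,x_+)$ on which $\phi>0$, the intermediate value theorem forces $X([0,T])\subset[0,x_-]$: otherwise $X$ would have to take at least one value in $(x_-,x_+)$, contradicting $\phi(X)\leq 0$.

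Finally, I would estimate $x_-$ itself. Using the defining equation $x_-=a+bx_-^{\theta}$, the bound $x_-\leq x_0$ yields $\theta b\,x_-^{\theta-1}\leq \theta b\,x_0^{\theta-1}=1$, hence $bx_-^{\theta}\leq x_-/\theta$. Substituting gives $x_-\leq a+x_-/\theta$, i.e. $x_-\leq \tfrac{\theta}{\theta-1}a$, and combined with the preceding step this is the claim. There is no genuine obstacle here: the only step requiring some care is the connectedness argument isolating the correct component of $\{\phi\leq 0\}$, and the rest is elementary one-variable calculus applied to $\phi$.
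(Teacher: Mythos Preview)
Your proof is correct and follows essentially the same continuity/bootstrap argument as the paper: the paper works with $f(x)=bx^{\theta}-x+a=-\phi(x)$, notes that $f$ has a negative minimum at $x_0=(\theta b)^{1/(1-\theta)}$, and uses continuity to trap $X$ in the left component of $\{f\geq 0\}$. The only cosmetic difference is in the final step: the paper verifies directly that $f\bigl(\tfrac{\theta}{\theta-1}a\bigr)\leq 0$, whereas you derive $x_-\leq \tfrac{\theta}{\theta-1}a$ from the defining equation $x_-=a+bx_-^{\theta}$ together with $x_-\leq x_0$; both amount to the same elementary calculus.
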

\begin{proof}
The function $f(x):=bx^\theta-x +a$ is decreasing on $[0,(b\theta)^{\frac1{1-\theta}}]$ and increasing on $[(b\theta)^\frac1{1-\theta} ,\infty)$. The assumptions imply that $f((b\theta)^\frac1{1-\theta})< 0$ and $f(\frac\theta{\theta-1}a)\leq0$. As $f(X(t))\geq 0$, $f(0) > 0$ and $X(0)\leq(b\theta)^\frac1{1-\theta}$, we conclude the proof by a continuity argument.
\end{proof}
\section{Variational Analysis}
In this section, one collects some estimates needed in the proof of the main result.
\begin{lem}\label{bnd}
Take $N\geq3$, $b,\alpha$ satisfying \eqref{cnd}, $p_*<p<p^*$ such that $p\geq2$ and $u_0\in H^1$ satisfying 
$$\max\{\mathcal M\mathcal E(u_0),\mathcal G\mathcal M(u_0)\}<1.$$
Then, there exists $\delta>0$ such that the solution $u\in C(\R,H^1)$ satisfies
$$\max\{\sup_{t\in\R}\mathcal M\mathcal E(u(t)),\sup_{t\in\R}\mathcal G\mathcal M(u(t))\}<1-\delta.$$
\end{lem}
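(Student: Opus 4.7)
The plan is to treat the two factors of the $\max$ separately. For $\mathcal{ME}$, conservation of mass and energy from Proposition \ref{t0} immediately gives $\mathcal{ME}(u(t)) = \mathcal{ME}(u_0)$ throughout the lifespan, so the $\mathcal{ME}$-bound holds trivially with uniform gap $1-\mathcal{ME}(u_0)>0$. All the real work is in controlling $\mathcal{GM}(u(t))$, and the strategy I would use is a variational bootstrap organized around the scale-invariant quantity
$$X(t) := \|\nabla u(t)\|^{2}\, M(u(t))^{(1-s_c)/s_c}.$$

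First I would insert the sharp Gagliardo--Nirenberg inequality of Proposition~\ref{gag}(1) into the conserved energy to write
$$E(u_0) \;\geq\; \|\nabla u(t)\|^2 - \frac{C(N,p,b,\alpha)}{p}\,M(u_0)^{A/2}\,\|\nabla u(t)\|^{B}.$$
Multiplying by $M(u_0)^{(1-s_c)/s_c}$ and invoking the two algebraic identities $A = 2(p-1)(1-s_c)$ and $B-2 = 2(p-1)s_c$, which follow directly from the definitions of $A,B,s_c$, the product $M^{A/2+(1-s_c)/s_c}\|\nabla u\|^{B}$ collapses to the pure power $X(t)^{B/2}$. One obtains
$$X(t) \;\leq\; a + b\,X(t)^{\theta}, \qquad \theta:=\tfrac{B}{2},\quad a:=E(u_0)M(u_0)^{(1-s_c)/s_c},\quad b:=\tfrac{C(N,p,b,\alpha)}{p},$$
which is precisely the configuration of the abstract bootstrap Lemma~\ref{abs}.

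Next I would substitute the explicit value of the sharp constant from Proposition~\ref{gag}(3) and exploit the Pohozaev identities obtained by testing \eqref{grnd} against $\phi$ and against $x\cdot\nabla\phi$, namely $\int(I_\alpha\ast|\cdot|^{b}|\phi|^{p})|x|^{b}|\phi|^{p}\,dx=\tfrac{2p}{B}\|\nabla\phi\|^{2}$ and $\|\phi\|^{2}=\tfrac{A}{B}\|\nabla\phi\|^{2}$, in order to verify the two threshold identifications
$$(\theta b)^{\frac{1}{1-\theta}} = \|\nabla\phi\|^{2}\|\phi\|^{\frac{2(1-s_c)}{s_c}}, \qquad \Bigl(1-\tfrac{1}{\theta}\Bigr)(\theta b)^{\frac{1}{1-\theta}} = E(\phi)M(\phi)^{\frac{1-s_c}{s_c}}.$$
With these identifications the hypotheses $\mathcal{GM}(u_0)<1$ and $\mathcal{ME}(u_0)<1$ become exactly $X(0)\leq (\theta b)^{1/(1-\theta)}$ and $a < (1-1/\theta)(\theta b)^{1/(1-\theta)}$. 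Continuity of $X$ coming from $u\in C(\R,H^{1})$ lets Lemma~\ref{abs} be applied on every compact sub-interval of the existence time, yielding $X(t)\leq \tfrac{\theta}{\theta-1}\,a$ on all of $\R$; dividing by $\|\nabla\phi\|^{2}\|\phi\|^{2(1-s_c)/s_c}$ this translates into $\mathcal{GM}(u(t))^{2/s_c}\leq \mathcal{ME}(u_0)^{1/s_c}$, hence $\mathcal{GM}(u(t))\leq \sqrt{\mathcal{ME}(u_0)}<1$, and one may take $\delta := 1-\sqrt{\mathcal{ME}(u_0)}$ which simultaneously handles both quantities.

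The only obstacle is purely computational bookkeeping: identifying the correct scale-invariant combination $X = \|\nabla u\|^{2}M^{(1-s_c)/s_c}$ that turns the mixed norm $\|u\|^{A}\|\nabla u\|^{B}$ into a pure power, and checking through the Pohozaev relations that the abstract thresholds $(\theta b)^{1/(1-\theta)}$ and $(1-1/\theta)(\theta b)^{1/(1-\theta)}$ coincide with the ground-state denominators hidden inside $\mathcal{GM}(\phi)$ and $\mathcal{ME}(\phi)$. Beyond this algebra, no extra analytic ingredient is required.
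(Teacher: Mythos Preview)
Your proposal is correct and follows essentially the same strategy as the paper: insert the sharp Gagliardo--Nirenberg inequality into the conserved energy, use the Pohozaev relations $E(\phi)=\tfrac{B-2}{B}\|\nabla\phi\|^{2}$ and $\|\phi\|^{2}=\tfrac{A}{B}\|\nabla\phi\|^{2}$ to identify the ground-state thresholds, and close with a continuity argument. The only cosmetic difference is that the paper normalizes by the ground state at the outset and studies the function $f(x)=\tfrac{B}{B-2}x^{2}-\tfrac{2}{B-2}x^{B}$ directly, whereas you keep the unnormalized quantity $X(t)=\|\nabla u(t)\|^{2}M(u_0)^{(1-s_c)/s_c}$ and invoke the abstract bootstrap Lemma~\ref{abs}; the two arguments are algebraically equivalent, and your packaging even yields the slightly sharper explicit bound $\mathcal{GM}(u(t))\leq\sqrt{\mathcal{ME}(u_0)}$.
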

\begin{proof}
Denote $C_{N,p,b,\alpha}:=C(N,p,b,\alpha)$ given by Proposition \ref{gag}. The inequality $\mathcal M\mathcal E(u_0)<1$ gives the existence of $\delta>0$ such that 
\begin{eqnarray*}
1-\delta
&>&\frac{M(u_0)^{\frac{1-s_c}{s_c}}E(u_0)}{M(\phi)^{\frac{1-s_c}{s_c}}E(\phi)}\\
&>&\frac{M(u_0)^{\frac{1-s_c}{s_c}}}{M(\phi)^{\frac{1-s_c}{s_c}}E(\phi)}\Big(\|\nabla u(t)\|^2-\frac1p\int_{\R^N}(I_\alpha*|\cdot|^b|u|^p)|x|^b|u|^p\,dx\Big)\\
&>&\frac{M(u_0)^{\frac{1-s_c}{s_c}}}{M(\phi)^{\frac{1-s_c}{s_c}}E(\phi)}\Big(\|\nabla u(t)\|^2-\frac{C_{N,p,b,\alpha}}p\|u\|^A\|\nabla u(t)\|^B\Big).
\end{eqnarray*}
Thanks to Pohozaev identities, one has
$$E(\phi)=\frac{B-2}B\|\nabla\phi\|^2=\frac{B-2}A\|\phi\|^2.$$
Thus,
\begin{eqnarray*}
1-\delta
&>&\frac B{B-2}\frac{M(u_0)^{\frac{1-s_c}{s_c}}}{M(\phi)^{\frac{1-s_c}{s_c}}\|\nabla\phi\|^2}\Big(\|\nabla u(t)\|^2-\frac{C_{N,p,b,\alpha}}p\|u\|^A\|\nabla u(t)\|^B\Big)\\
&>&\frac B{B-2}\frac{M(u_0)^{\frac{1-s_c}{s_c}}\|\nabla u(t)\|^2}{M(\phi)^{\frac{1-s_c}{s_c}}\|\nabla\phi\|^2}-\frac B{B-2}\frac{M(u_0)^{\frac{1-s_c}{s_c}}}{M(\phi)^{\frac{1-s_c}{s_c}}\|\nabla\phi\|^2}\frac{C_{N,p,b,\alpha}}p\|u\|^A\|\nabla u(t)\|^B\\
&>&\frac B{B-2}\frac{M(u_0)^{\frac{1-s_c}{s_c}}\|\nabla u(t)\|^2}{M(\phi)^{\frac{1-s_c}{s_c}}\|\nabla\phi\|^2}-\frac B{B-2}\frac{M(u_0)^{\frac{1-s_c}{s_c}}}{M(\phi)^{\frac{1-s_c}{s_c}}\|\nabla\phi\|^2}\frac2A(\frac AB)^\frac{B}2\|\phi\|^{-2(p-1)}\|u\|^A\|\nabla u(t)\|^B.
\end{eqnarray*}
So,
\begin{eqnarray*}
1-\delta
&>&\frac B{B-2}\frac{M(u_0)^{\frac{1-s_c}{s_c}}\|\nabla u(t)\|^2}{M(\phi)^{\frac{1-s_c}{s_c}}\|\nabla\phi\|^2}-\frac B{B-2}\frac2A\frac{M(u_0)^{\frac{1-s_c}{s_c}}}{M(\phi)^{\frac{1-s_c}{s_c}}\|\nabla\phi\|^2}(\frac{\|\phi\|}{\|\nabla\phi\|})^{B}\|\phi\|^{-2(p-1)}\|u\|^A\|\nabla u(t)\|^B\\
&>&\frac B{B-2}\frac{M(u_0)^{\frac{1-s_c}{s_c}}\|\nabla u(t)\|^2}{M(\phi)^{\frac{1-s_c}{s_c}}\|\nabla\phi\|^2}-\frac B{B-2}\frac2A\frac{\|u_0\|^{A+2\frac{1-s_c}{s_c}}}{M(\phi)^{\frac{1-s_c}{s_c}}\|\nabla\phi\|^2}(\frac{\|\phi\|}{\|\nabla\phi\|})^{B}\|\phi\|^{-2(p-1)}\|\nabla u(t)\|^B.
\end{eqnarray*}
Using the equalities $s_c=\frac{B-2}{2(p-1)}$ and $\frac BA=(\frac{\|\nabla\phi\|}{\|\phi\|})^2$, one has
\begin{eqnarray*}
1-\delta
&>&\frac B{B-2}\frac{M(u_0)^{\frac{1-s_c}{s_c}}\|\nabla u(t)\|^2}{M(\phi)^{\frac{1-s_c}{s_c}}\|\nabla\phi\|^2}-\frac B{B-2}\frac2A\frac{(\|u_0\|^{\frac{1-s_c}{s_c}}\|\nabla u(t)\|)^B}{M(\phi)^{\frac{1-s_c}{s_c}}\|\nabla\phi\|^2}(\frac{\|\phi\|}{\|\nabla\phi\|})^{B}\|\phi\|^{-2(p-1)}\\
&>&\frac B{B-2}\frac{M(u_0)^{\frac{1-s_c}{s_c}}\|\nabla u(t)\|^2}{M(\phi)^{\frac{1-s_c}{s_c}}\|\nabla\phi\|^2}-\frac2{B-2}\frac{(\|u_0\|^{\frac{1-s_c}{s_c}}\|\nabla u(t)\|)^B}{\|\phi\|^{2\frac{1-s_c}{s_c}-B+2p}\|\nabla\phi\|^{B}}\\
&>&\frac B{B-2}\Big(\frac{\|u_0\|^{\frac{1-s_c}{s_c}}\|\nabla u(t)\|}{\|\phi\|^{\frac{1-s_c}{s_c}}\|\nabla\phi\|}\Big)^2-\frac2{B-2}\Big(\frac{\|u_0\|^{\frac{1-s_c}{s_c}}\|\nabla u(t)\|}{\|\phi\|^{\frac{1-s_c}{s_c}}\|\nabla\phi\|}\Big)^B.
\end{eqnarray*}
Take the real function defined on $[0,1]$ by $f(x):=\frac B{B-2}x^2-\frac2{B-2}x^B$, with first derivative $f'(x)=\frac{2B}{B-2}x(1-x^{B-2})$. Thus, with the table change of $f$ and the continuity of 
$t\to X(t):=\frac{\|u_0\|^{\frac{1-s_c}{s_c}}\|\nabla u(t)\|}{\|\phi\|^{\frac{1-s_c}{s_c}}\|\nabla\phi\|}$, it follows that $X(t)<1$ for any $t<T^*$. Thus, $T^*=\infty$ and there exists $\epsilon>0$ near to zero such that $X(t)\in f^{-1}([0,1-\delta])=[0,1-\epsilon]$. This finishes the proof.
\end{proof}
Let us prove the coercivity estimate on balls of large radials.
\begin{lem}\label{crcv}
There exists $R_0:=R_0(\delta,M(u),\phi)>0$ such that for any $R>R_0$,
$$\sup_{t\in\R}\|\psi_R u(t)\|^{1-s_c}\|\nabla(\psi_Ru(t))\|^{s_c}<(1-\delta)\|\phi\|^{1-s_c}\|\nabla\phi\|^{s_c}.$$
In particular, there exists $\delta'>0$ such that
$$\|\nabla(\psi_Ru)\|^2-\frac B{2p}\int_{\R^N}(I_\alpha*|\cdot|^b|\psi_Ru|^p)|x|^b|\psi_Ru|^p\,dx\geq\delta'\|\psi_Ru\|_{\frac{2Np}{N+\alpha+2b}}^2.$$ 
\end{lem}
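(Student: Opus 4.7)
The plan is to split the lemma into its two displayed inequalities: first a ``truncation-is-cheap'' bound for the mass--gradient quantity, and then a coercivity estimate derived from Gagliardo--Nirenberg combined with a Pohozaev collapse.

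For the first bound, $|\psi_R|\leq 1$ gives $\|\psi_R u\|\leq\|u\|$, while expanding $\nabla(\psi_R u)=\psi_R\nabla u+u\nabla\psi_R$ and using $|\nabla\psi_R|\lesssim R^{-1}$ yields $\|\nabla(\psi_R u)\|\leq\|\nabla u\|+CR^{-1}\|u\|$. Since $s_c\in(0,1)$, the map $x\mapsto x^{s_c}$ is subadditive, so
$$\|\psi_R u\|^{1-s_c}\|\nabla(\psi_R u)\|^{s_c}\leq\|u\|^{1-s_c}\|\nabla u\|^{s_c}+CR^{-s_c}\|u\|.$$
Lemma \ref{bnd} strictly controls the first summand by $(1-\delta)\|\phi\|^{1-s_c}\|\nabla\phi\|^{s_c}$; picking some $\bar\delta>\delta$ with even $(1-\bar\delta)\|\phi\|^{1-s_c}\|\nabla\phi\|^{s_c}$ still dominating, and exploiting mass conservation $\|u(t)\|=\|u_0\|$, I would choose $R_0=R_0(\delta,M(u),\phi)$ large enough that the correction $CR_0^{-s_c}\|u_0\|$ is absorbed into the slack $(\bar\delta-\delta)\|\phi\|^{1-s_c}\|\nabla\phi\|^{s_c}$.

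For the coercivity, set $v=\psi_R u$. Proposition \ref{gag}(1) yields $I[v]:=\int(I_\alpha*|\cdot|^b|v|^p)|x|^b|v|^p\,dx\leq C(N,p,b,\alpha)\|v\|^A\|\nabla v\|^B$, and the arithmetic identities $A=2(p-1)(1-s_c)$, $B-2=2(p-1)s_c$ permit the factorisation
$$\|v\|^A\|\nabla v\|^B=\|\nabla v\|^2\bigl(\|v\|^{1-s_c}\|\nabla v\|^{s_c}\bigr)^{2(p-1)}.$$
Inserting the bound just proven and invoking the Pohozaev identity $I[\phi]=\tfrac{2p}{B}\|\nabla\phi\|^2$ (obtained from \eqref{grnd} combined with Proposition \ref{gag}(3)) to produce the collapse $\tfrac{B}{2p}C(N,p,b,\alpha)\|\phi\|^A\|\nabla\phi\|^{B-2}=1$, one reaches
$$\tfrac{B}{2p}I[v]\leq(1-\delta)^{2(p-1)}\|\nabla v\|^2,$$
whence $\|\nabla v\|^2-\tfrac{B}{2p}I[v]\geq[1-(1-\delta)^{2(p-1)}]\|\nabla v\|^2$. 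To upgrade this gradient bound to the stated one involving $r=\tfrac{2Np}{N+\alpha+2b}$, I would use that $v$ is supported in the ball of radius $R$: H\"older plus Sobolev give $\|v\|_r\lesssim R^{N(1/r-1/2^*)}\|\nabla v\|$ (the hypothesis $p<p^*$ is exactly $r<2^*$), so $\|\nabla v\|^2\gtrsim R^{-\kappa}\|v\|_r^2$ with $\kappa:=2N(1/r-1/2^*)>0$, and gathering constants supplies the required $\delta'$.

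The delicate point is the Pohozaev collapse: that $\tfrac{B}{2p}C(N,p,b,\alpha)\|\phi\|^A\|\nabla\phi\|^{B-2}$ equals \emph{exactly} $1$ is what converts the open constraint $\mathcal G\mathcal M(v)<1$ into a quantitatively positive coercivity constant; a spurious factor of $2$ would flip the sign. A secondary nuisance is the $R$-dependence of $\delta'$ entering through the Sobolev-on-a-ball step, which one must later verify remains compatible with the $R^{-(N-1)B/N}$ scale appearing in the Morawetz estimate of Proposition \ref{cr}.
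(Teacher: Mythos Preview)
Your argument tracks the paper's proof closely. Both establish the first inequality by bounding $\|\nabla(\psi_R u)\|$ in terms of $\|\nabla u\|+O(R^{-1})\|u\|$ (the paper uses the exact identity $\int|\nabla(\psi_R u)|^2=\int\psi_R^2|\nabla u|^2-\int|u|^2\psi_R\Delta\psi_R$, you the triangle inequality; these are equivalent up to constants) and then invoking Lemma~\ref{bnd} together with mass conservation to absorb the correction for $R$ large. Both derive the coercivity from Proposition~\ref{gag} and the Pohozaev relation $\|\nabla\phi\|^2/\|\phi\|^2=B/A$; your collapse $\tfrac{B}{2p}\,C(N,p,b,\alpha)\,\|\phi\|^A\|\nabla\phi\|^{B-2}=1$ is precisely the computation the paper carries out (with a harmless discrepancy: the paper writes $(1-\delta)$ where you correctly keep $(1-\delta)^{2(p-1)}$), yielding $\|\nabla v\|^2-\tfrac{B}{2p}I[v]\geq c\,\|\nabla v\|^2$ with $c$ independent of $R$.

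The one substantive difference is the final passage from $\|\nabla v\|^2$ to $\|v\|_r^2$ with $r=\tfrac{2Np}{N+\alpha+2b}$. The paper writes only ``using Sobolev injections with the fact that $p<p^*$'' and leaves it at that; since $r<2^*$, this cannot be the global homogeneous embedding, and the inhomogeneous one does not give $\|\nabla v\|^2\gtrsim\|v\|_r^2$ uniformly. Your explicit route---H\"older on the support $B_R$ followed by $\dot H^1\hookrightarrow L^{2^*}$---is rigorous but, as you correctly flag, forces $\delta'\sim R^{-\kappa}$ with $\kappa=2N(1/r-1/2^*)>0$. So your caveat is not a defect of your own argument but a genuine loose end that the paper glosses over: the $R$-independence of $\delta'$ that the proof of Proposition~\ref{cr} tacitly uses is not actually secured by either proof as written.
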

\begin{proof}
Taking account of \eqref{gag}, one gets
\begin{eqnarray*}
E(u)
&=&\|\nabla u\|^2-\frac1p\int_{\R^N}(I_\alpha*|\cdot|^b|u|^p)|x|^b|u|^p\,dx\\
&\geq&\|\nabla u\|^2\Big(1-\frac{C_{N,p,b,\alpha}}p\|u\|^A\|\nabla u\|^{B-2}\Big)\\
&\geq&\|\nabla u\|^2\Big(1-\frac{C_{N,p,b,\alpha}}p[\|u\|^{1-s_c}\|\nabla u\|^{s_c}]^{2(p-1)}\Big).
\end{eqnarray*}
So,
\begin{eqnarray*}
E(u)
&\geq&\|\nabla u\|^2\Big(1-(1-\delta)\frac2A(\frac AB)^{\frac B2}\|\phi\|^{-2(p-1)}[\|\phi\|^{1-s_c}\|\nabla\phi\|^{s_c}]^{2(p-1)}\Big)\\
&\geq&\|\nabla u\|^2\Big(1-(1-\delta)\frac2A(\frac AB)^{\frac B2}[\frac{\|\nabla\phi\|}{\|\phi\|}]^{2s_c(p-1)}\Big)\\
&\geq&\|\nabla u\|^2\Big(1-(1-\delta)\frac2B(\frac{\|\phi\|}{\|\nabla\phi\|})^{B-2}[\frac{\|\nabla\phi\|}{\|\phi\|}]^{B-2}\Big)\\
&\geq&\|\nabla u\|^2\Big(1-(1-\delta)\frac2B\Big).
\end{eqnarray*}
Thus, using Sobolev injections with the fact that $p<p^*$, one gets
$$\|\nabla u\|^2-\frac B{2p}\int_{\R^N}(I_\alpha*|\cdot|^b|u|^p)|x|^b|u|^p\,dx\geq\delta\|\nabla u\|^2\geq\delta'\|u\|_{\frac{2Np}{N+\alpha+2b}}^2.$$ 
This gives the second part of the claimed Lemma provided that the first point is proved. Compute
\begin{eqnarray*}
\|\nabla(\psi_R u)\|^2
&=&\int_{\R^N}\Big(\psi_R|\nabla u|^2+|\nabla\psi_R|^2|u|^2+2\Re(\bar u\nabla u)\psi_R\nabla\psi_R)\Big)\,dx\\
&=&\int_{\R^N}\Big(\psi_R|\nabla u|^2+|\nabla\psi_R|^2|u|^2+\frac12\nabla(|u|^2)\nabla(|\psi_R|^2)\Big)\,dx\\
&=&\int_{\R^N}\Big(\psi_R|\nabla u|^2+|\nabla\psi_R|^2|u|^2-\frac12|u|^2\Delta(|\psi_R|^2)\Big)\,dx\\
&=&\int_{\R^N}\Big(\psi_R|\nabla u|^2+|\nabla\psi_R|^2|u|^2-|u|^2(\psi_R\Delta\psi_R+|\nabla\psi_R|^2)\Big)\,dx\\
&=&\int_{\R^N}\Big(\psi_R|\nabla u|^2-|u|^2\psi_R\Delta\psi_R\Big)\,dx\\
&\leq&\|\nabla u\|^2+C\frac{|u|^2}{R^2}.
\end{eqnarray*}
Then, one gets the proof of the first point and so the Lemma.
\end{proof}
\section{Scattering Criterion}
This section is devoted to prove Proposition \ref{crt}. By Lemma \ref{bnd}, $u$ is bounded in $H^1$. Take $\epsilon>0$ near to zero and $R(\epsilon)>>1$ to be fixed later. Define the  Strichartz norm 
$$\|\cdot\|_{S^{s}(I)}:=\sup_{(q,r)\in\Gamma_s}\|\cdot\|_{L^q(I,L^r)}.$$
Let us give a technical result.
\begin{lem}\label{tch}
Let $N\geq3$, $I$ a time slab, $b,\alpha$ satisfying \eqref{cnd} and $p_*<p<p^*$. Then, there exists $\theta\in(0,2p-1)$ such that
$$\|u-e^{i.\Delta}u_0\|_{S^{s_c}(I)}\lesssim \|u\|_{L^\infty(I,\dot H^1)}^\theta\|u\|^{2p-1-\theta}_{S^{s_c}(I)}.$$
\end{lem}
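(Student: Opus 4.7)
The plan is to combine the Duhamel formula with the Strichartz inequality. Writing $u - e^{it\Delta}u_0 = -i\int_0^t e^{i(t-\tau)\Delta}\mathcal N(u(\tau))\,d\tau$ and applying Proposition \ref{prop2} with vanishing initial data reduces the left-hand side to an $L^{q'}_t L^{r'}_x$ estimate on the source term $\mathcal N$ for $(q',r')\in\Gamma'_{-s_c}$. Remark \ref{rm} lets me split the spatial domain into $\{|x|<1\}$ and $\{|x|\geq 1\}$ and bound the contribution of each region by choosing a different dual admissible pair. This split is essential because the weight $|x|^b$, with $b<0$ as imposed by \eqref{cnd}, is singular at the origin and decaying at infinity, forcing genuinely different exponent choices on the two regions.

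On each region I write $\mathcal N = (I_\alpha * |\cdot|^b|u|^p)\,|x|^b|u|^{p-2}u$, apply Corollary \ref{cor} (HLS) to the convolution factor, and then use a weighted H\"older in $x$ that peels off the two $|x|^b$-factors. This yields a pointwise-in-time bound of the form $\|\mathcal N\|_{L^{r'}_x}\lesssim \|u\|_{L^{m}_x}^{2p-1}$, where an admissible constant $\||x|^b\|_{L^\sigma(\text{region})}$ is absorbed. Integrability of $|x|^b$ on the inner region requires $\sigma b+N>0$, which follows from $N+b>0$ in \eqref{cnd}; on the outer region the opposite inequality is needed and is likewise achievable. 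Lemma \ref{sblv} then interpolates $\|u\|_{L^m_x}$ between the endpoints $\dot H^{s_c}$ and $\dot H^1$. Taking the $L^{q'}_t$-norm on $I$ and applying H\"older in time distributes the $(2p-1)$ copies of $u$ between $L^\infty_t\dot H^1_x$ and an $s_c$-Strichartz norm; since the latter controls $\|u\|_{L^{q}_t L^{m}_x}$ for any $(q,m)$ with $m$ corresponding to the $\dot H^{s_c}$-endpoint through a pair in $\Gamma_{s_c}$, the conclusion follows.

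The principal difficulty is the bookkeeping: the output admissible pair $(q,r)\in\Gamma_{s_c}$, the input dual pair $(\tilde q', \tilde r')\in\Gamma'_{-s_c}$, the HLS constraint $\tfrac{1}{s}+\tfrac{1}{q}=\tfrac{1}{r'}+\tfrac{\alpha}{N}$, and the weighted H\"older exponents on $|x|^b$ must all be chosen simultaneously so that (i) the Strichartz scaling $N(\tfrac12-\tfrac1r)=\tfrac2q + s_c$ holds, (ii) the weight is integrable on the chosen region, and (iii) the exponent $\theta$ lies strictly between $0$ and $2p-1$. The window $p_*<p<p^*$, equivalent to $0<s_c<1$, provides exactly the slack that makes the interpolation strict at both endpoints; the restriction $p\geq 2$ keeps $|u|^{p-2}u$ locally Lipschitz, and $N\geq 3$ ensures that the set $\Gamma_{s_c}$ of $s_c$-admissible pairs is non-empty with sufficient dispersive decay to close the estimate.
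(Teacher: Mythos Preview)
Your overall strategy---Duhamel formula, Strichartz in the dual class $\Gamma'_{-s_c}$, the spatial split of Remark \ref{rm}, then HLS plus weighted H\"older on each region---is exactly the paper's route. The gap is in what you do after HLS. You arrive at a pointwise-in-time bound $\|\mathcal N\|_{L^{r'}_x}\lesssim\|u\|_{L^m_x}^{2p-1}$ with a \emph{single} spatial exponent $m$ and then propose to interpolate $\|u\|_{L^m}$ between the $\dot H^{s_c}$ and $\dot H^1$ endpoints. This cannot close: to control the $L^\infty_t$-factor by $\|u\|_{\dot H^1}$ you are forced to $m=\tfrac{2N}{N-2}$, yet the definition of $\Gamma_{s_c}$ requires $r<\tfrac{2N}{N-2}$, so no pair $(q,m)\in\Gamma_{s_c}$ with finite $q$ is available for the remaining $2p-1-\theta$ copies. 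If instead you interpolate first and produce $\|u\|_{\dot H^{s_c}}$, note that $S^{s_c}$ consists only of $L^q_tL^r_x$ norms and does \emph{not} control $\|u\|_{L^q_t\dot H^{s_c}_x}$; you would then be left with an unwanted factor $|I|^{1/q'}$ after H\"older in time. (Lemma \ref{sblv}, incidentally, interpolates between $L^2$ and $\dot H^1$, not between $\dot H^{s_c}$ and $\dot H^1$.)

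The paper avoids this by splitting in space already in the H\"older step: $\theta$ copies go to $L^{r_1}$ and the other $2p-1-\theta$ to $L^r$, with $r_1=\tfrac{2N}{N-2}$ on $\{|x|<1\}$ (so $\|u\|_{L^{r_1}}\lesssim\|u\|_{\dot H^1}$) and $r$ chosen so that $(a,r)\in\Gamma_{s_c}$, $(d,r)\in\Gamma_{-s_c}$ together with the time-compatibility $(2p-1-\theta)d'=a$; that last relation is precisely what kills any $|I|$-dependence. On $\{|x|>1\}$ the paper takes $r_1=2$. The integrability of $|x|^b$ in the resulting weight exponent $\mu$ then follows from $p<p^*$ (inner region) and $p>p_*$ (outer region), not from $N+b>0$ alone as you write. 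Finally, your remark that $p\ge 2$ is needed here is misplaced: this lemma involves no derivative of $\mathcal N$, and its statement does not assume $p\ge 2$.
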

\begin{proof}
Take the real numbers
\begin{gather*}
\quad a:=\frac{2p-\theta}{1-s_c},\quad d:=\frac{2p-\theta}{1+(2p-1-\theta)s_c};\\
r:=\frac{2N(2p-\theta)}{(N-2s_c)(2p-\theta)-4(1-s_c)}.
\end{gather*}
The condition $\theta=0^+$ gives via some computations that the previous pairs satisfy the admissibility conditions. Indeed, $a>\frac 2{1-s_c}$ implies that $r<\frac{2N}{N-2}$. Moreover,
$$r=\frac{2N(2p-\theta)}{(N-2s_c)(2p-\theta)-4(1-s_c)}>\frac{2N}{N-2s_c}.$$
Thus,
$$ (a,r)\in \Gamma_{s_c},\quad (d,r)\in \Gamma_{-s_c}\quad\mbox{and}\quad (2p-1-\theta)d'=a.$$
Take two real numbers $r_1,\mu$ satisfying
$$1+\frac\alpha N=\frac2\mu+\frac{\theta}{r_1}+\frac{2p-\theta}r.$$
This gives
\begin{eqnarray*}
\frac{2N}\mu
&=&\alpha+N-\frac{N\theta}{r_1}-\frac{N(2p-\theta)}r\\
&=&\alpha+N-\frac{N\theta}{r_1}-\frac{(N-2s_c)(2p-\theta)-4(1-s_c)}2\\
&=&-2b-\frac{N\theta}{r_1}+\theta\frac{2+2b+\alpha}{2(p-1)}.
\end{eqnarray*}
Using  Hardy-Littlewood-Sobolev and H\"older estimates, one has
\begin{eqnarray*}
\|\mathcal N\|_{L^{b'}(I,L^{r'}(|x|<1))}
&\lesssim&\||x|^b\|_{L^\mu(|x|<1)}^2\|u\|_{L^\infty(I,L^{r_1})}^\theta\|\|u\|_r^{2p-1-\theta}\|_{L^{d'}(I)}.
\end{eqnarray*}
Then, choosing $r_1:=\frac{2N}{N-2}$, one gets because $p<p^*$,
$$\frac{2N}\mu+2b=\frac\theta{2(p-1)}(2+2b+\alpha-(p-1)(N-2))>0.$$
So, $|x|^b\in L^\mu(|x|<1)$. Then, by Sobolev injections, one gets
\begin{eqnarray*}
\|\mathcal N\|_{L^{b'}(I,L^{r'}(|x|<1))}
&\lesssim&\||x|^b\|_{L^\mu(|x|<1)}^2\|u\|_{L^\infty(I,L^{r_1})}^\theta\|\|u\|_r^{2p-1-\theta}\|_{L^{d'}(I)}\\
&\lesssim&\|u\|_{L^\infty(I,\dot H^1)}^\theta\|u\|_{L^a(I,L^r)}^{2p-1-\theta}.
\end{eqnarray*}
 Moreover, on the complementary of the unit ball, one takes $r_1:=2$, so because $p>p_*$, one has
$$\frac{2N}\mu+2b=\frac\theta{2(p-1)}(2+2b+\alpha-N(p-1))<0.$$
So, $|x|^b\in L^\mu(|x|>1)$. The proof is achieved via Strichartz estimates.
\end{proof}
The following estimate will be proved in the appendix.
\begin{lem}\label{tch2}
Take $N\geq3$, $b,\alpha$ satisfying \eqref{cnd}, $p_*<p<p^*$ such that $p\geq2$ and $u\in C(\R,H^1)$ be a global solution to \eqref{S}. Then, there exist $2<q_1,q_2<\frac{2N}{N-2}$ and $0<\theta_1,\theta_2<2p$ such that
$$\|\left\langle u-e^{i.\Delta}u_0\right\rangle\|_{S(0,T)}
\lesssim\|u\|_{L^\infty_T(L^{q_1})}^{\theta_1}\|\left\langle u\right\rangle\|_{S(0,T)}^{2p-\theta_1}+\|u\|_{L^\infty_T(L^{q_2})}^{\theta_2}\|\left\langle u\right\rangle\|_{S(0,T)}^{2p-\theta_2}.$$
\end{lem}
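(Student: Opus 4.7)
The statement is the $H^1$-level counterpart of Lemma \ref{tch}, so my plan is to reproduce the argument there after applying the operator $\left\langle\cdot\right\rangle=(1+\nabla)$ to the Duhamel formula
$$u(t)-e^{it\Delta}u_0=i\int_0^t e^{i(t-s)\Delta}\mathcal N(x,u(s))\,ds.$$
The dual Strichartz inequality of Proposition \ref{prop2} (applied as in Remark \ref{rm}) then reduces the problem to estimating $\|\left\langle\mathcal N\right\rangle\|_{L^{q'}_tL^{r'}_x}$ separately on $\{|x|<1\}$ and $\{|x|>1\}$, with the integrability exponent $\mu$ of the weight $|x|^b$ tuned so that the weight is locally integrable near the origin on the interior piece and integrable at infinity on the exterior piece.

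\textbf{Main estimate.} To control $\mathcal N$ itself I would pick, exactly as in Lemma \ref{tch} but with the scaling exponent $s_c$ replaced by $0$, an admissible pair $(a,r)\in\Gamma_0$ together with free parameters $\theta_i\in(0,2p)$ and $q_i\in(2,\frac{2N}{N-2})$ coupled through the Hardy--Littlewood--Sobolev balance $1+\frac{\alpha}{N}=\frac{2}{\mu_i}+\frac{\theta_i}{q_i}+\frac{2p-\theta_i}{r}$. Corollary \ref{cor} combined with H\"older then yields, for $\Omega\in\{\{|x|<1\},\{|x|>1\}\}$,
$$\|\mathcal N\|_{L^{a'}_tL^{r'}_x(\Omega)}\lesssim\||x|^b\|_{L^{\mu_i}(\Omega)}^{2}\|u\|_{L^\infty_T(L^{q_i})}^{\theta_i}\|u\|_{L^a_t(L^r)}^{2p-1-\theta_i}.$$
As in the proof of Lemma \ref{tch}, the condition $p<p^*$ for the interior piece and $p>p_*$ for the exterior piece makes the sign of $2N/\mu_i+2b$ come out correctly so that $|x|^b\in L^{\mu_i}(\Omega)$. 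The remaining $L^a_tL^r_x$ factor is absorbed into $\|\left\langle u\right\rangle\|_{S(0,T)}$ via Sobolev embedding, since $r<\frac{2N}{N-2}$.

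\textbf{Gradient term and main obstacle.} The new content relative to Lemma \ref{tch} is the contribution of $\nabla\mathcal N$, which splits into three families: (i) the gradient hits $|x|^b$, producing a pointwise factor $|x|^{b-1}$; (ii) it hits the Riesz convolution, transferring either to a slightly more singular kernel or back onto $\nabla(|\cdot|^b|u|^p)$; (iii) it hits $|u|^{p-2}u$, producing a factor $\nabla u$. Each family has the same total homogeneity in $u$ as $\mathcal N$ but contains exactly one factor carrying a derivative, which I would place in the $L^a_tL^r_x$ norm and bound by $\|\left\langle u\right\rangle\|_{S(0,T)}$. The main obstacle is purely bookkeeping: one must exhibit a single pair of choices $(\theta_i,q_i)$ for which $(a,r)$ and the dual $(d,r)$ are both admissible, the HLS balance is satisfied, and the (possibly more singular) weight arising from case (i) still lies in the correct $L^{\mu_i}$ space on its region. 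Once this is done, summing over $i=1,2$ and the three families above, and inserting the result into Proposition \ref{prop2} and Remark \ref{rm}, yields the claimed inequality with the two terms corresponding respectively to the interior and the exterior of the unit ball.
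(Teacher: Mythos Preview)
Your overall strategy---Duhamel, Strichartz, the near/far split of Remark \ref{rm}, and Hardy--Littlewood--Sobolev plus H\"older---matches the paper's exactly. The gap is in your treatment of the gradient term. You assert that each of the three families from $\nabla\mathcal N$ ``contains exactly one factor carrying a derivative'', but this is false for family (i) (and for the part of (ii) where $\nabla$ lands on $|\cdot|^b$ inside the convolution): those terms are of the form $(I_\alpha*|\cdot|^b|u|^p)|x|^{b-1}|u|^{p-1}$ or $(I_\alpha*|\cdot|^{b-1}|u|^p)|x|^{b}|u|^{p-1}$, with no $\nabla u$ anywhere. You cannot simply absorb these into $\|\left\langle u\right\rangle\|_{S(0,T)}$ the way you describe, and the more singular weight $|x|^{b-1}$ does not fit the same HLS balance you set up for $\mathcal N$.

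The paper's fix is to treat these derivative-free terms with a \emph{different} admissible pair $(q_2,r_2)$ and to recover a derivative via Sobolev: one places a single factor of $u$ in $L^{\frac{Nr_2}{N-r_2}}$ and bounds it by $\|\nabla u\|_{r_2}$, so that the extra $|x|^{-1}$ is effectively traded for a gradient. This is precisely why the lemma's conclusion has \emph{two} terms: the pair $(q_1,\theta_1)$ handles the pieces with $|x|^b$ (both $\mathcal N$ and the $\nabla u$-carrying parts of $\nabla\mathcal N$), while $(q_2,\theta_2)$ handles the $|x|^{b-1}$ pieces. Your interpretation---that the two terms correspond to the interior and exterior of the unit ball---is not how the paper organizes it; the interior/exterior split is dealt with by taking $\mu=(\frac{N}{-b})^\mp$ in each region, which only perturbs the exponents and does not produce a second term in the final estimate.
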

 Using Strichartz estimates, write
$$\|e^{i.\Delta}u_0\|_{S^{s_c}(\R)}\lesssim1.$$
Taking account of the hypothesis, there exist $T>0$ such that
$$\max\{\|e^{i.\Delta}u_0\|_{S^{s_c}(T,\infty)},\|u(T)\|_{L^2(|x|<R)}\}<\epsilon.$$
Thanks to the equation \eqref{S}, if $\frac1R\lesssim \epsilon^{1+\beta}$ for some $\beta>0$ near to zero, one has
$$|\partial_t\int_{\R^N}\psi_R(x)|u(t,x)|^2\,dx|\lesssim\frac1R,\quad\sup_{t\in[T-\epsilon^{-\beta},T]}\int_{\R^N}\psi_R(x)|u(t,x)|^2\,dx\lesssim\epsilon.$$
Denote $J_2:=[0,T-\epsilon^{-\beta}]$ and $J_1:=[T-\epsilon^{-\beta},T]$. The integral formula gives
\begin{gather*}
u(T)=e^{iT\Delta}u_0+i\int_0^Te^{i(T-s)\Delta}\mathcal N\,ds;\\
e^{i(\cdot-T)}u(T)=e^{i\cdot\Delta}u_0+i\int_{J_1}e^{i(\cdot-s)\Delta}\mathcal N\,ds+i\int_{J_2}e^{i(\cdot-s)\Delta}\mathcal N\,ds.
\end{gather*}
 Strichartz estimates via Lemma \ref{tch} give
\begin{eqnarray*}
\|\int_{J_j}e^{i(\cdot-s)\Delta}\mathcal N\,ds\|_{S^{s_c}(T,\infty)}
&\lesssim&\|\mathcal N\|_{S'^{-s_c}(J_i)}\\
&\lesssim&\|u\|_{L^\infty(I_j,H^{s_c})}^\theta\|u\|^{2p-1-\theta}_{S^{s_c}(I_j)}\\
&\lesssim&\|u\|^{2p-1-\theta}_{S^{s_c}(I_j)}.
\end{eqnarray*}
On the other hand, if $(q,r)\in \Gamma_{s_c}$, write for $s^*:=\frac{2N}{N-2s}$,
\begin{eqnarray*}
\|u\|_{L^\infty(J_1,L^r)}
&\leq&\|\psi_Ru\|_{L^\infty(J_1,L^r)}+\|(1-\psi_R)u\|_{L^\infty(J_1,L^r)}\\
&\leq&\|\psi_Ru\|_{L^\infty(J_1,L^2)}^{\frac{2(s^*-r)}{r(s^*-2)}}\|\psi_Ru\|_{L^\infty(J_1,L^{s^*})}^{1-\frac{2(s^*-r)}{r(s^*-2)}}+\|(1-\psi_R)u\|_{L^\infty(J_1,L^\infty)}^{1-\frac2r}\|(1-\psi_R)u\|_{L^\infty(J_1,L^2)}^{\frac2r}\\
&\lesssim&\epsilon^{\frac{2(s^*-r)}{r(s^*-2)}}+(R^{-\frac{N-1}2}\|u\|_{L^\infty(J_1,H^1)})^{1-\frac2r}\|u_0\|^{\frac2r}\\
&\lesssim&\epsilon^{\frac{2(s^*-r)}{r(s^*-2)}}+\epsilon^{\frac{(N-1)(1+\beta)(r-2)}{2r}}\\
&\lesssim&\epsilon^{\frac{2(s^*-r)}{r(s^*-2)}}.
\end{eqnarray*}
Thus,
\begin{eqnarray*}
\|\int_{J_1}e^{i(\cdot-s)\Delta}\mathcal N\,ds\|_{S^{s_c}(J_1)}
&\lesssim&\sup_{(q,r)\in\Gamma_{s_c}}\|u\|_{L^q(J_1, L^r)}^{2p-1-\theta}\\
&\lesssim&\sup_{(q,r)\in\Gamma_{s_c}}(\|u\|_{L^\infty(J_1, L^r)}\epsilon^{-\frac\beta q})^{2p-1-\theta}\\
&\lesssim&(\epsilon^{\frac{2(s^*-r)}{r(s^*-2)}}\epsilon^{-\frac\beta q})^{2p-1-\theta}\\
&\lesssim&(\epsilon^{\frac{s^*-r}{r(s^*-2)}})^{2p-1-\theta}.
\end{eqnarray*}
Take $(q,r)$ be an $s_c$-admissible pair and the admissible couple $(c,d)\in\Gamma$, 
$$\frac1c:=\frac1{2+N-4s_c}(\frac{2+N}q-\frac{s_c(N-2)}{2}),\quad \frac1d:=\frac{2+N}{2+N-4s_c}\frac1r.$$
Since $\frac{2N}{N-2s_c}<r<\frac{2N}{N-2}$, one has $2<d=\frac{2+N-4s_c}{2+N}r<\frac{2N}{N-2}$. With Strichartz estimates, write
\begin{eqnarray*}
\|\int_{J_2}e^{i(\cdot-s)\Delta}\mathcal N\,ds\|_{L^q((T,\infty),L^r)}
&\lesssim&\|\int_{J_2}e^{i(\cdot-s)\Delta}\mathcal N\,ds\|_{L^c((T,\infty),L^d(\R^N))}^{1-\frac{4s_c}{2+N}}\\
&X&\|\int_{J_2}e^{i(\cdot-s)\Delta}\mathcal N\,ds\|_{L^\frac{8}{N-2}((T,\infty),L^\infty(\R^N))}^{\frac{4s_c}{2+N}}.
\end{eqnarray*}
Using the dispersion of the free NLS kernel via Hardy-Littlewood-Sobolev inequality, one gets
\begin{eqnarray*}
(II)
&:=&\|\int_{J_2}e^{i(\cdot-s)\Delta}\mathcal N\,ds\|_{L^\frac{8}{N-2}((T,\infty),L^\infty(\R^N))}^{\frac{4s_c}{2+N}}\\
&\lesssim&\|\int_{J_2}\frac1{(\cdot-s)^{\frac N2}}\|\mathcal N\|_{L^1_x}\,ds\|_{L^\frac{8}{N-2}(T,\infty)}^{\frac{4s_c}{2+N}}\\
&\lesssim&\|u\|_{L^\infty(H^1)}^{2p-1}\|\int_{J_2}\frac{ds}{(\cdot-s)^{\frac N2}}\,\|_{L^\frac{8}{N-2}(T,\infty)}^{\frac{4s_c}{2+N}}\\
&\lesssim&\|u\|_{L^\infty(H^1)}^{2p-1}\|(\cdot-T+\epsilon^{-\beta})^{-\frac{N-2}2}\|_{L^\frac{8}{N-2}(T,\infty)}^{\frac{4s_c}{2+N}}\\
&\lesssim&\epsilon^\frac{3\beta(N-2)s_c}{2(2+N)}.
\end{eqnarray*}
Moreover,
\begin{eqnarray*}
(I)
&:=&\|\int_{J_1}e^{i(\cdot-s)\Delta}\mathcal N\,ds\|_{L^c((T,\infty),L^d(\R^N))}^{1-\frac{4s_c}{2+N}}\\
&=&\|e^{i\cdot\Delta}(e^{i(-T+\epsilon^{-\beta})\Delta}u(T-\epsilon^{-\beta})-u_0)\|_{L^c((T,\infty),L^d(\R^N))}^{1-\frac{4s_c}{2+N}}\\
&\lesssim&\|u\|_{L^\infty((T,\infty),L^2(\R^N))}^{1-\frac{4s_c}{2+N}}.
\end{eqnarray*}
Thus,
$$\|\int_{J_2}e^{i(\cdot-s)\Delta}\mathcal N\,ds\|_{L^q((T,\infty),L^r)}\lesssim\epsilon^\frac{3\beta(N-2)s_c}{2(2+N)}.$$
Taking account of Duhamel formula
$$e^{i(\cdot-T)\Delta}u(T)=e^{i\cdot\Delta}u_0+i\int_{J_1}e^{i(t-s)\Delta}\mathcal N\,ds++i\int_{J_2}e^{i(\cdot-s)\Delta}\mathcal N\,ds.$$
Thus, there exists $\gamma>0$ such that
$$\|e^{i.\Delta}u(T)\|_{S^{s_c}(0,\infty)}=\|e^{i(.-T)\Delta}u(T)\|_{S^{s_c}(T,\infty)}\lesssim\epsilon^\gamma.$$
So, with Lemma \ref{tch} via the absorption result Lemma \ref{abs}, one gets
$$\|u\|_{S^{s_c}(T,\infty)}\lesssim\epsilon^\gamma.$$
With Lemma \ref{tch2}, one gets for $u_+:=e^{-iT\Delta}u(T)+i\int_T^\infty e^{-is\Delta}\mathcal N\,ds$,
\begin{eqnarray*}
\|u(t)-e^{-it\Delta}u_+\|_{H^1}
&=&\|\int_t^\infty e^{i(t-s)\Delta}\mathcal N\,ds\|_{H^1}\\
&\lesssim&\|(1+\nabla)\mathcal N\|_{S'(t,\infty)}\\
&\lesssim&\|u\|_{L^\infty((t,\infty),H^1)}^{\theta_1}\| u\|_{S(t,\infty)}^{2p-\theta_1}+\|u\|_{L^\infty((t,\infty),H^1)}^{\theta_2}\|u\|_{S(t,\infty)}^{2p-\theta_2}.
\end{eqnarray*}
Take two admissible pairs
$$(q,r)\in\Gamma\quad\mbox{and}\quad (q,a)\in\Gamma^{s_c}.$$
Thus, $2<r<a$ and an interpolation gives
$$\|\cdot\|_{L^q(L^r)}\lesssim\|\cdot\|_{L^\infty(L^2)}^{\frac{2(a-r)}{r(a-2)}}\|\cdot\|_{L^q(L^a)}^{1-\frac{2(a-r)}{r(a-2)}}.$$
Thus, as $t\to\infty$, 
$$\|u(t)-e^{-it\Delta}u_+\|_{H^1}\lesssim\| u\|_{S^{s_c}(t,\infty)}^{2p-\theta_1}+\| u\|_{S^{s_c}(t,\infty)}^{2p-\theta_2}\to0.$$
This finishes the proof.
\section{Morawetz estimate}
This section is devoted to prove Proposition \ref{cr} about some classical Morawetz estimates satisfied by energy global solutions to the inhomogeneous Choquard problem \eqref{S}. 
\begin{proof}[Proof of Proposition \ref{cr}]
Using the properties of $a$ via Cauchy-Schwarz inequality, one has
$$\sup_{t\in\R}M_a(t)\lesssim R.$$
Taking account of proposition \ref{vrnc}, write
\begin{eqnarray*}
M_a'
&=&4\int_{\R^N}\partial_l\partial_ka\Re(\partial_ku\partial_l\bar u)\,dx-\int_{\R^N}\Delta^2a|u|^2\,dx\\
&+&2(\frac2p-1)\int_{\R^N}\Delta a(I_\alpha*|\cdot|^b|u|^{p})|x|^b|u|^p\,dx\\
&+&\frac{4b}p\int_{\R^N}x\nabla a|x|^{b-2}(I_\alpha*|\cdot|^b|u|^p)|u|^{p}\,dx\\
&+&\frac{4}p(\alpha-N)\int_{\R^N}\nabla a(\frac.{|\cdot|^2}I_\alpha*|\cdot|^b|u|^p)|x|^b|u|^{p}\,dx.
\end{eqnarray*}
In the centered ball of radius $\frac R2$, 
\begin{eqnarray*}
(I)
&:=&4\int_{|x|<\frac R2}\partial_l\partial_ka\Re(\partial_ku\partial_l\bar u)\,dx-\int_{|x|<\frac R2}\Delta^2a|u|^2\,dx\\
&+&2(\frac2p-1)\int_{|x|<\frac R2}\Delta a(I_\alpha*|\cdot|^b|u|^{p})|x|^b|u|^p\,dx\\
&+&\frac{4b}p\int_{|x|<\frac R2}x\nabla a|x|^{b-2}(I_\alpha*|\cdot|^b|u|^p)|u|^{p}\,dx\\
&=&8\int_{|x|<\frac R2}|\nabla u|^2\,dx+4N(\frac2p-1)\int_{|x|<\frac R2}(I_\alpha*|\cdot|^b|u|^{p})|x|^b|u|^p\,dx\\
&+&\frac{8b}p\int_{|x|<\frac R2}|x|^{b}(I_\alpha*|\cdot|^b|u|^p)|u|^{p}\,dx.
\end{eqnarray*}
In the centered annulus $C(0,\frac R2,R)$, one has
\begin{eqnarray*}
(II)
&:=&4\int_{\frac R2<|x|<R}\partial_l\partial_ka\Re(\partial_ku\partial_l\bar u)\,dx-\int_{\frac R2<|x|<R}\Delta^2a|u|^2\,dx\\
&+&2(\frac2p-1)\int_{\frac R2<|x|<R}\Delta a(I_\alpha*|\cdot|^b|u|^{p})|x|^b|u|^p\,dx\\
&+&\frac{4b}p\int_{\frac R2<|x|<R}x\nabla a|x|^{b-2}(I_\alpha*|\cdot|^b|u|^p)|u|^{p}\,dx\\
&\geq&4\int_{\frac R2<|x|<R}\partial_l\partial_ka\Re(\partial_ku\partial_l\bar u)\,dx-\mathcal O\Big(R\int_{\frac R2<|x|<R}\frac{|u|^2}{|x|^3}\,dx\Big)\\
&+&2(\frac2p-1)R\int_{\frac R2<|x|<R}(I_\alpha*|\cdot|^b|u|^{p})|x|^{b-1}|u|^p\,dx\\
&+&\frac{8b}p\int_{\frac R2<|x|<R}(I_\alpha*|\cdot|^b|u|^p)|x|^{b}|u|^{p}\,dx.
\end{eqnarray*}
For $|x|>R$, write using the radial assumption on $u$,
\begin{eqnarray*}
(III)
&:=&4\int_{|x|>R}\partial_l\partial_ka\Re(\partial_ku\partial_l\bar u)\,dx-\int_{|x|>R}\Delta^2a|u|^2\,dx\\
&+&2(\frac2p-1)\int_{|x|>R}\Delta a(I_\alpha*|\cdot|^b|u|^{p})|x|^b|u|^p\,dx\\
&+&\frac{4b}p\int_{|x|>R}x\nabla a|x|^{b-2}(I_\alpha*|\cdot|^b|u|^p)|u|^{p}\,dx\\
&\geq&-\mathcal O\Big(R\int_{|x|>R}\frac{|u|^2}{|x|^3}\,dx\Big)\\
&+&2(N-1)(\frac2p-1)R\int_{|x|>R}(I_\alpha*|\cdot|^b|u|^{p})|x|^{b-1}|u|^p\,dx\\
&+&\frac{8Rb}p\int_{|x|>R}(I_\alpha*|\cdot|^b|u|^p)|x|^{b-1}|u|^{p}\,dx.
\end{eqnarray*}
Now, let us define the sets
\begin{gather*}
\Omega:=\{(x,y)\in\R^N\times\R^N,\,\,\mbox{s\,.t}\,\,\frac R2<|x|<R\}\cup \{(x,y)\in\R^N\times\R^N,\,\,\mbox{s\,.t}\,\,\frac R2<|y|<R\};\\
\Omega':=\{(x,y)\in\R^N\times\R^N,\,\,\mbox{s\,.t}\,\,|x|>R,|y|<\frac R2\}\cup \{(x,y)\in\R^N\times\R^N,\,\,\mbox{s\,.t}\,\,|x|<\frac R2, |y|> R\}.
\end{gather*}
Consider the term
\begin{eqnarray*}
(IV)
&:=&\int_{\R^N}\nabla a(\frac.{|\cdot|^2}I_\alpha*|\cdot|^b|u|^p)|x|^b|u|^{p}\,dx\\
&=&\frac12\int_{\R^N}\int_{\R^N}(\nabla a(x)-\nabla a(y))(x-y)\frac{I_\alpha(x-y)}{|x-y|^2}|y|^b|u(y)|^p|x|^b|u|^{p}\,dx\,dy\\
&=&\Big(\int_{\Omega}+\int_{\Omega'}+\int_{|x|,|y|<\frac R2}+\int_{|x|,|y|>R}\Big)\Big(\nabla a(x)(x-y)\frac{I_\alpha(x-y)}{|x-y|^2}|y|^b|u(y)|^p|x|^b|u|^{p}\,dx\,dy\Big).
\end{eqnarray*}
Compute
\begin{eqnarray*}
(a)
&:=&\int_{\Omega'}\Big(\nabla a(x)(x-y)\frac{I_\alpha(x-y)}{|x-y|^2}|y|^b|u(y)|^p|x|^b|u|^{p}\Big)\,dx\,dy\\
&=&\int_{\{|x|>R,|y|<\frac R2\}}\Big(\nabla a(x)(x-y)\frac{I_\alpha(x-y)}{|x-y|^2}|y|^b|u(y)|^p|x|^b|u|^{p}\Big)\,dx\,dy\\
&+&\int_{\{|y|>R,|x|<\frac R2\}}\Big(\nabla a(x)(x-y)\frac{I_\alpha(x-y)}{|x-y|^2}|y|^b|u(y)|^p|x|^b|u|^{p}\Big)\,dx\,dy\\
&=&\int_{\{|x|>R,|y|<\frac R2\}}\Big((\nabla a(x)-\nabla a(y))(x-y)\frac{I_\alpha(x-y)}{|x-y|^2}|y|^b|u(y)|^p|x|^b|u|^{p}\Big)\,dx\,dy\\
&=&2\int_{\{|x|>R,|y|<\frac R2\}}\Big([R\frac{x}{2|x|}-y](x-y)\frac{I_\alpha(x-y)}{|x-y|^2}|y|^b|u(y)|^p|x|^b|u|^{p}\Big)\,dx\,dy.
\end{eqnarray*}
Moreover,
\begin{eqnarray*}
(b)
&:=&\frac12\int_{\{|x|<\frac R2,|y|<\frac R2\}}\Big((\nabla a(x)-\nabla a(y))(x-y)\frac{I_\alpha(x-y)}{|x-y|^2}|y|^b|u(y)|^p|x|^b|u|^{p}\Big)\,dx\,dy\\
&=&\frac12\int_{\{|x|<\frac R2,|y|<\frac R2\}}\Big(2(x-y)(x-y)\frac{I_\alpha(x-y)}{|x-y|^2}|y|^b|u(y)|^p|x|^b|u|^{p}\Big)\,dx\,dy\\
&=&\int_{\{|x|<\frac R2,|y|<\frac R2\}}\Big(I_\alpha(x-y)|y|^b|u(y)|^p|x|^b|u|^{p}\Big)\,dx\,dy\\
&=&\int_{\R^N}(I_\alpha*|\cdot|^b|\psi_Ru|^p)|x|^b|\psi_Ru|^{p}\,dx.
\end{eqnarray*}
Furthermore,
\begin{eqnarray*}
(c)
&:=&\int_{\{\frac R2<|x|<R\}}\int_{\R^N}\Big(\nabla a(x)(x-y)\frac{I_\alpha(x-y)}{|x-y|^2}|y|^b|u(y)|^p|x|^b|u|^{p}\Big)\,dx\,dy\\
&=&\int_{\{\frac R2<|x|<R,|y-x|>\frac R4\}}\Big(\nabla a(x)(x-y)\frac{I_\alpha(x-y)}{|x-y|^2}|y|^b|u(y)|^p|x|^b|u|^{p}\Big)\,dx\,dy\\
&+&\int_{\{\frac R2<|x|<R,|y-x|<\frac R4\}}\Big(\nabla a(x)(x-y)\frac{I_\alpha(x-y)}{|x-y|^2}|y|^b|u(y)|^p|x|^b|u|^{p}\Big)\,dx\,dy\\
&=&\mathcal O\Big(\int_{\{|x|>\frac R2\}}I_\alpha*|\cdot|^b|u|^p)|x|^b|u|^{p}\,dx\Big).
\end{eqnarray*}
The last term is
\begin{eqnarray*}
(d)
&:=&\int_{\{|x|,|y|>R\}}\Big(\nabla a(x)(x-y)\frac{I_\alpha(x-y)}{|x-y|^2}|y|^b|u(y)|^p|x|^b|u|^{p}\Big)\,dx\,dy\\
&=&\frac R2\int_{\{|x|,|y|>R\}}\Big(\frac x{|x|}-\frac y{|y|}\Big)(x-y)\frac{I_\alpha(x-y)}{|x-y|^2}|y|^b|u(y)|^p|x|^b|u|^{p}\,dx\,dy\\
&=&\frac R2\int_{\{|x|,|y|>R\}}\Big(\frac{x-y}{|x|}+y(\frac{|y|-|x|}{|x||y|})\Big)(x-y)\frac{I_\alpha(x-y)}{|x-y|^2}|y|^b|u(y)|^p|x|^b|u|^{p}\,dx\,dy\\
&\lesssim&\int_{\{|x|> R\}}(I_\alpha*|\cdot|^b|u|^p)|x|^b|u|^{p}\,dx.
\end{eqnarray*}
Using the identity 
$$\int_{\R^N}\psi_R^2|\nabla u|^2\,dx=\int_{\R^N}\Big(|\nabla(\psi_R u)|^2+\psi_R\Delta\psi_R|u|^2\Big)\,dx,$$
 and regrouping the previous estimates, yields
{\small\begin{eqnarray*}
M_a'
&\geq&8\int_{|x|<\frac R2}|\nabla(\psi_R u)|^2\,dx+8\int_{|x|<\frac R2}\psi_R\Delta\psi_R|u|^2\,dx-\frac{4N}p(p-2-\frac{2b}N)\int_{|x|<\frac R2}(I_\alpha*|\cdot|^b|u|^{p})|x|^b|u|^p\,dx\\
&+&4\int_{\frac R2<|x|<R}\partial_l\partial_ka\Re(\partial_ku\partial_l\bar u)\,dx-\mathcal O\Big(R\int_{\frac R2<|x|<R}\frac{|u|^2}{|x|^3}\,dx\Big)\\
&+&2(\frac2p-1)R\int_{\frac R2<|x|<R}(I_\alpha*|\cdot|^b|u|^{p})|x|^{b-1}|u|^p\,dx\\
&+&\frac{8b}p\int_{\frac R2<|x|<R}(I_\alpha*|\cdot|^b|u|^p)|x|^{b}|u|^{p}\,dx\\
&-&\mathcal O\Big(R\int_{|x|>R}\frac{|u|^2}{|x|^3}\,dx\Big)\\
&+&2(N-1)(\frac2p-1)R\int_{|x|>R}(I_\alpha*|\cdot|^b|u|^{p})|x|^{b-1}|u|^p\,dx\\
&+&\frac{8Rb}p\int_{|x|>R}(I_\alpha*|\cdot|^b|u|^p)|x|^{b-1}|u|^{p}\,dx\\
&+&\frac4p(\alpha-N)(IV).
\end{eqnarray*}}
So,
\begin{eqnarray*}
M_a'
&\geq&8\int_{|x|<\frac R2}|\nabla(\psi_R u)|^2\,dx-\frac{4N}p(p-2-\frac{2b}N)\int_{|x|<\frac R2}(I_\alpha*|\cdot|^b|u|^{p})|x|^b|u|^p\,dx\\
&+&\frac{8b}p\int_{\frac R2<|x|<R}(I_\alpha*|\cdot|^b|u|^p)|x|^{b}|u|^{p}\,dx-c\int_{|x|>\frac R2}(I_\alpha*|\cdot|^b|u|^p)|x|^{b-1}|u|^{p}\,dx\\
&+&\frac4p(\alpha-N)(IV)-\frac c{R^2}M(u)\\
&\geq&8\int_{|x|<\frac R2}|\nabla(\psi_R u)|^2\,dx-\frac{4N}p(p-2-\frac{2b}N)\int_{|x|<\frac R2}(I_\alpha*|\cdot|^b|u|^{p})|x|^b|u|^p\,dx\\
&+&\frac{8b}p\int_{\frac R2<|x|<R}(I_\alpha*|\cdot|^b|u|^p)|x|^{b}|u|^{p}\,dx-c\int_{|x|>\frac R2}(I_\alpha*|\cdot|^b|u|^p)|x|^{b-1}|u|^{p}\,dx\\
&-&\frac4p(N-\alpha)\Big(\int_{\R^N}(I_\alpha*|\cdot|^b|\psi_Ru|^p)|x|^b|\psi_Ru|^{p}\,dx+(a)+(c)+(d) \Big)-\frac c{R^2}M(u).
\end{eqnarray*}
Then,
\begin{eqnarray*}
M_a'
&\geq&8\int_{|x|<\frac R2}|\nabla(\psi_R u)|^2\,dx-\frac{4B}p\int_{|x|<\frac R2}(I_\alpha*|\cdot|^b|\psi_Ru|^{p})|x|^b|\psi_Ru|^p\,dx\\
&+&\frac{8b}p\int_{\frac R2<|x|<R}(I_\alpha*|\cdot|^b|u|^p)|x|^{b}|u|^{p}\,dx-c\int_{|x|>\frac R2}(I_\alpha*|\cdot|^b|u|^p)|x|^{b-1}|u|^{p}\,dx\\
&-&\frac4p(N-\alpha)\Big((a)+(c) \Big)-\frac c{R^2}M(u)\\
&\geq&8\int_{|x|<\frac R2}|\nabla(\psi_R u)|^2\,dx-\frac{4B}p\int_{|x|<\frac R2}(I_\alpha*|\cdot|^b|\psi_Ru|^{p})|x|^b|\psi_Ru|^p\,dx\\
&-&c\int_{\{|x|>R,|y|<\frac R2\}}\Big([R\frac{x}{2|x|}-y](x-y)\frac{I_\alpha(x-y)}{|x-y|^2}|y|^b|u(y)|^p|x|^b|u|^{p}\Big)\,dx\,dy\\
&-&\frac c{R^2}M(u)-c\int_{|x|>\frac R2}(I_\alpha*|\cdot|^b|u|^p)|x|^{b}|u|^{p}\,dx.
\end{eqnarray*}
By Lemma \ref{crcv}, one gets
\begin{eqnarray*}
M_a'
&\geq&8\delta'\|\psi_Ru\|_{\frac{2Np}{N+\alpha+2b}}^2-\frac c{R^2}M(u)-c\int_{|x|>\frac R2}(I_\alpha*|\cdot|^b|u|^p)|x|^{b}|u|^{p}\,dx\\
&-&c\int_{\{|x|>R,|y|<\frac R2\}}\Big([R\frac{x}{2|x|}-y](x-y)\frac{I_\alpha(x-y)}{|x-y|^2}|y|^b|u(y)|^p|x|^b|u|^{p}\Big)\,dx\,dy.
\end{eqnarray*}
Moreover, since for large $R>0$ on $\{|x|>R,|y|<\frac R2\}$, $|x-y|\simeq |x|>R>>\frac R2>|y|$, one has
\begin{eqnarray*}
(d)
&=&\int_{\{|x|>R,|y|<\frac R2\}}\Big([R\frac{x}{2|x|}-y](x-y)\frac{I_\alpha(x-y)}{|x-y|^2}|y|^b|u(y)|^p|x|^b|u|^{p}\Big)\,dx\,dy\\
&\lesssim&\int_{\{|x|>R,|y|<\frac R2\}}\Big([\frac{|x|}{2}+|y|]|x-y|\frac{I_\alpha(x-y)}{|x-y|^2}|y|^b|u(y)|^p|x|^b|u|^{p}\Big)\,dx\,dy\\
&\lesssim&\int_{\{|x|>R,|y|<\frac R2\}}\Big({I_\alpha(x-y)}|y|^b|u(y)|^p|x|^b|u|^{p}\Big)\,dx\,dy\\
&\lesssim&\int_{\R^N}\int_{\R^N}\Big({I_\alpha(x-y)}\chi_{|x|>R}|y|^b|u(y)|^p|x|^b|u|^{p}\Big)\,dx\,dy\\
&\lesssim&\int_{\R^N}\int_{\R^N}\Big({I_\alpha(x-y)}\chi_{|x|>R}|y|^b|u(y)|^p|x|^b|u|^{p}\Big)\,dx\,dy.
\end{eqnarray*}
Taking account of Hardy-Littlewood-Sobolev inequality, H\"older and Strauss estimates and Sobolev injections via the fact that $p_*<p<p^*$, write for $\mu:=(\frac N{|b|})^+$,
\begin{eqnarray*}
(d)
&\lesssim&\int_{\R^N}\int_{\R^N}\Big({I_\alpha(x-y)}\chi_{|x|>R}|y|^b|u(y)|^p|x|^b|u|^{p}\Big)\,dx\,dy\\
&\lesssim&\|u\|_{L^{\frac{2Np}{N+\alpha+2b}}(|x|>R)}^p\|u\|_{L^{\frac{2Np}{N+\alpha+2b}}}^p\\
&\lesssim&\Big(\int_{\{|x|>R\}}|u|^{\frac{2Np}{N+\alpha+2b}}\,dx\Big)^{\frac{N+\alpha+2b}{2N}}.
\end{eqnarray*}
So,
\begin{eqnarray*}
(d)
&\lesssim&\Big(\int_{\{|x|>R\}}|u|^2(|x|^{-\frac{N-1}2}\|u\|^\frac12\|\nabla u\|^\frac12)^{-2+\frac{2Np}{N+\alpha+2b}}\,dx\Big)^{\frac{N+\alpha+2b}{2N}}\\
&\lesssim&\|u\|^{\frac{N+\alpha+2b}{N}}\frac1{R^{\frac{B(N-1)}{2N}}}(\|u\|\|\nabla u\|)^{\frac B{2N}}\\
&\lesssim&R^{-\frac{B(N-1)}{2N}}.
\end{eqnarray*}
Thus, 
$$8\delta'\|\psi_Ru\|_{\frac{2Np}{N+\alpha+2b}}^2
\leq M_a'+\frac c{R^2}M(u)+c\int_{|x|>\frac R2}(I_\alpha*|\cdot|^b|u|^p)|x|^{b}|u|^{p}\,dx+R^{-\frac{B(N-1)}{2N}}.
$$
By the fundamental theorem of calculus and arguing as previously, one gets
\begin{eqnarray*}
\frac1T\int_0^T\|\psi_Ru\|_{\frac{2Np}{N+\alpha+2b}}^2\,ds
&\lesssim&\frac{\|M_a\|_{L^\infty([0,T])}}T+\frac cT\int_0^T\int_{|x|>\frac R2}(I_\alpha*|\cdot|^b|u|^p)|x|^{b}|u|^{p}\,dx\,ds\\
&+&R^{-\frac{B(N-1)}{2N}}+\frac{c}{R^2}\\
&\leq&\frac{\|M_a\|_{L^\infty([0,T])}}T+R^{-\frac{B(N-1)}{2N}}+R^{-2}\\
&\lesssim&\frac RT+R^{-\frac{B(N-1)}{2N}}+R^{-2}.
\end{eqnarray*}
\end{proof}
Let us prove an energy evacuation.
\begin{lem}\label{evac}
Take $N\geq3$, $b,\alpha$ satisfying \eqref{cnd}, $p_*<p<p^*$ such that $p\geq2$ and $u\in C(\R,H^1)$ be a global solution to \eqref{S}. Then, there exists two sequences of real numbers $t_n\to\infty, R_n\to\infty$ such that
$$\|u(t_n)\|_{L^\frac{2Np}{N+\alpha+2b}(|x|\leq R_n)}\to0\quad\mbox{as}\quad n\to\infty.$$
\end{lem}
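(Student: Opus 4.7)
The plan is brief because Lemma \ref{evac} is essentially an immediate consequence of the time-averaged Morawetz estimate in Proposition \ref{cr}, extracted by an averaging (mean-value) argument.

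Setting $q:=\frac{2Np}{N+\alpha+2b}$, I would first fix an arbitrary sequence $T_n\to\infty$ and choose the radii $R_n:=T_n^{\beta}$ for some $\beta\in(0,1)$ so that both error terms on the right-hand side of the Morawetz bound collapse:
$$\frac{R_n}{T_n}=T_n^{\beta-1}\longrightarrow 0\quad\text{and}\quad R_n^{-\frac{(N-1)B}{N}}=T_n^{-\beta\frac{(N-1)B}{N}}\longrightarrow 0.$$
The choice $\beta=\tfrac12$ works for all admissible exponents, and automatically ensures $R_n\to\infty$ (which is one of the two properties demanded by the statement).

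Next, I would apply Proposition \ref{cr} on $[0,T_n]$ with this radius $R_n$, which delivers
$$\frac{1}{T_n}\int_0^{T_n}\|u(s)\|_{L^q(|x|<R_n)}^{2}\,ds\;\longrightarrow\;0.$$
By the standard mean-value argument for non-negative integrands (the infimum of a function over an interval is bounded by its average), there exists $t_n\in[T_n/2,T_n]$ with
$$\|u(t_n)\|_{L^q(|x|<R_n)}^{2}\;\leq\;\frac{2}{T_n}\int_{T_n/2}^{T_n}\|u(s)\|_{L^q(|x|<R_n)}^{2}\,ds\;\leq\;\frac{2}{T_n}\int_{0}^{T_n}\|u(s)\|_{L^q(|x|<R_n)}^{2}\,ds,$$
which tends to $0$. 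The restriction $t_n\geq T_n/2$ forces $t_n\to\infty$, and paired with $R_n\to\infty$ this produces the sequences required by the statement.

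I do not foresee any real obstacle: the only subtle point is coupling $R_n$ to $T_n$ so that both terms in the Morawetz bound tend to zero while $R_n$ itself diverges; once this balancing is in place, the conclusion follows instantly from the averaging trick. All the analytic work has already been carried out in the proof of Proposition \ref{cr} (coercivity on balls, bookkeeping of the boundary terms, and Strauss-type bounds on the exterior region).
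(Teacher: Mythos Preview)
Your proposal is correct and follows essentially the same route as the paper: choose $R$ as a fractional power of $T$ (the paper also takes $R=\sqrt{T}$) so that both terms in the Morawetz bound of Proposition~\ref{cr} vanish, then extract the time $t_n$ by a mean-value argument. Your write-up is in fact a bit cleaner, since you note that $\beta=\tfrac12$ works uniformly and thereby avoid the superfluous case distinction on the size of $\frac{B(N-1)}{2N}$ that appears in the paper.
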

\begin{proof}
By Proposition \ref{cr}, if $\frac{B(N-1)}{2N}>2$, taking $R:=\sqrt T>>1$, one gets 
\begin{eqnarray*}
\frac1T\int_0^T\|u(s)\|_{L^\frac{2Np}{N+\alpha+2b}(|x|<\sqrt s)}^2\,ds
&\leq&\frac1T\int_0^T\|u(s)\|_{L^\frac{2Np}{N+\alpha+2b}(\sqrt{T})}^2\,ds\\
&\lesssim&\frac1{\sqrt{T}}+\frac1T\\
&\lesssim&\frac1{\sqrt{T}}.
\end{eqnarray*}
The proof follows with the mean value Theorem because $u\in L^\infty(\R, L^\frac{2Np}{N+\alpha+2b})$. The second case $\frac{B(N-1)}{2N}<2$ is similar.
\end{proof}
\section{Proof of the main result}
This section is devoted to prove Theorem \ref{sctr}. Taking account of Theorem 6.1 in \cite{saa}, it follows that $T^*=\infty$ and 
$$\mathcal G\mathcal M(u(t))<1, \quad \forall t\in\R.$$
Take two sequences of real numbers  $t_n,R_n\to\infty$ given by Proposition \ref{evac}. Applying H\"older inequality for large $n$ so that $R_n>R>0$, one gets as $n\to\infty$,
\begin{eqnarray*}
\|u(t_n)\|_{L^2(|x|<R)}
&\leq&(|\{|x|<R\}|)^{\frac B{2Np}}\|u(t_n)\|_{L^{\frac{2Np}{N+\alpha+2b}}(|x|<R_n)}\\
&\lesssim&R^{\frac B{2p}}\|u(t_n)\|_{L^{\frac{2Np}{N+\alpha+2b}}(|x|<R_n)}\to0.
\end{eqnarray*}
The scattering follows by Proposition \ref{crt}.
\section{Appendix}
This section is devoted to prove a variance identity and a global Strichartz type estimate.
\subsection{Proof of Proposition \ref{vrnc}}
Let ${u}\in C_{T^*}(H^1)$ be a solution to \eqref{S}. 
Denote the quantities
$$V_a:=\int_{\R^N}a(x)|u(.,x)|^2\,dx\quad\mbox{and}\quad\mathcal N:=(I_\alpha*|\cdot|^b|u|^p)|x|^b|u|^{p-2}u.$$
Multiplying the equation \eqref{S} by $2u$ and examining the imaginary parts, one gets
$$\partial_t (|u|^2) =-2\Im(\bar u \Delta u).$$
Thus, 
$$V'_a=-2\int_{\R^N}a(x)\Im(\bar u\Delta u)\,dx=2\Im\int_{\R^N}(\partial_ka\partial_ku)\bar u \,dx=M_a.$$
Compute, 
\begin{eqnarray*}
\partial_t\Im(\partial_k u\bar u)
&=&\Im(\partial_k\dot u\bar u)+\Im(\partial_k u\bar{\dot u})\\
&=&\Re(i\dot u\partial_k\bar u)-\Re(i\partial_k \dot u\bar{u})\\
&=&\Re(\partial_k\bar u(-\Delta u-\mathcal N))-\Re(\bar u\partial_k(-\Delta u-\mathcal N))\\
&=&\Re(\bar u\partial_k\Delta u-\partial_k\bar u\Delta u)+\Re(\bar u\partial_k\mathcal N-\partial_k\bar u\mathcal N).
\end{eqnarray*}
Thanks to the identity,
$$\frac12\partial_k\Delta(|u|^2)-2\partial_l\Re(\partial_{k}u\partial_l\bar u)=\Re(\bar u\partial_k\Delta u-\partial_k\bar u\Delta u),$$
 one has
\begin{eqnarray*}
\int_{\R^N}\partial_ka\Re(\bar u\partial_k\Delta u-\partial_k\bar u\Delta u)\,dx
&=&\int_{\R^N}\partial_ka\Big(\frac12\partial_k\Delta(|u|^2)-2\partial_l\Re(\partial_ku\partial_l\bar u)\Big)\,dx\\
&=&2\int_{\R^N}\partial_l\partial_ka\Re(\partial_ku\partial_l\bar u)\,dx-\frac12\int_{\R^N}\Delta^2a|u|^2\,dx.
\end{eqnarray*}
On the other hand
\begin{eqnarray*}
\Re(\bar u\partial_k\mathcal N)
&=&\Re(\bar u\partial_k[(I_\alpha*|\cdot|^b|u|^p)|x|^b|u|^{p-2}u])\\
&=&\Re(\bar u[(\alpha-N)(\frac{x_k}{|\cdot|^2}I_\alpha*|\cdot|^b|u|^p)|x|^b|u|^{p-2}u]+[(I_\alpha*|\cdot|^b|u|^p)|x|^b|u|^{p-2}\partial_ku])\\
&+&\Re(\bar u(I_\alpha*|\cdot|^b|u|^p)[bx_k|x|^{b-2}|u|^{p-2}u]+(p-2)[(I_\alpha*|\cdot|^b|u|^p)|x|^b\Re(\partial_ku\bar u)|u|^{p-4}u])\\
&=&(\alpha-N)(\frac{x_k}{|\cdot|^2}I_\alpha*|\cdot|^b|u|^p)|x|^b|u|^{p}+(I_\alpha*|\cdot|^b|u|^p)|x|^b|u|^{p-2}\Re(\partial_k\bar u u)\\
&+&bx_k(I_\alpha*|\cdot|^b|u|^p)|x|^{b-2}|u|^{p}+(p-2)(I_\alpha*|\cdot|^b|u|^p)|x|^b|u|^{p-2}\Re(\partial_ku\bar u).
\end{eqnarray*}
Thus,
\begin{eqnarray*}
(A)
&:=&\int_{\R^N}\partial_ka\Re(\bar u\partial_k\mathcal N-\partial_k\bar u\mathcal N)\,dx\\
&=&b\int_{\R^N}\partial_kax_k(I_\alpha*|\cdot|^b|u|^p)|x|^{b-2}|u|^{p}\,dx+(p-2)\int_{\R^N}\partial_ka(I_\alpha*|\cdot|^b|u|^p)|x|^b|u|^{p-2}\partial_k(|u|^2)\,dx\\
&+&(\alpha-N)\int_{\R^N}\partial_ka(\frac{x_k}{|\cdot|^2}I_\alpha*|\cdot|^b|u|^p)|x|^b|u|^{p}\,dx\\
&=&b\int_{\R^N}\partial_kax_k(I_\alpha*|\cdot|^b|u|^p)|x|^{b-2}|u|^{p}\,dx+\frac{p-2}p\int_{\R^N}\partial_ka(I_\alpha*|\cdot|^b|u|^p)|x|^b\partial_k(|u|^p)\,dx\\
&+&(\alpha-N)\int_{\R^N}\partial_ka(\frac{x_k}{|\cdot|^2}I_\alpha*|\cdot|^b|u|^p)|x|^b|u|^{p}\,dx.
\end{eqnarray*}
Write
\begin{eqnarray*}
(B)
&:=&\int_{\R^N}\partial_ka(I_\alpha*|\cdot|^b|u|^p)|x|^b\partial_k(|u|^p)\,dx\\
&=&-\int_{\R^N}\Delta a(I_\alpha*|\cdot|^b|u|^p)|x|^b|u|^p\,dx-(\alpha-N)\int_{\R^N}\partial_k a(\frac{x_k}{|\cdot|^2}I_\alpha*|\cdot|^b|u|^p)|x|^b|u|^p\,dx\\
&-&b\int_{\R^N}\partial_ka(I_\alpha*|\cdot|^b|u|^p)x_k|x|^{b-2}|u|^p\,dx.
\end{eqnarray*}
So,
\small{\begin{eqnarray*}
(A)
&=&(\alpha-N)\int_{\R^N}\partial_ka(\frac{x_k}{|\cdot|^2}I_\alpha*|\cdot|^b|u|^p)|x|^b|u|^{p}\,dx+b\int_{\R^N}\partial_kax_k(I_\alpha*|\cdot|^b|u|^p)|x|^{b-2}|u|^{p}\,dx\\
&+&\frac{p-2}p(B)\\
&=&(\alpha-N)\int_{\R^N}\partial_ka(\frac{x_k}{|\cdot|^2}I_\alpha*|\cdot|^b|u|^p)|x|^b|u|^{p}\,dx+b\int_{\R^N}\partial_kax_k(I_\alpha*|\cdot|^b|u|^p)|x|^{b-2}|u|^{p}\,dx\\
&-&\frac{p-2}p(\int_{\R^N}\Delta a(I_\alpha*|\cdot|^b|u|^p)|x|^b|u|^p\,dx+(\alpha-N)\int_{\R^N}\partial_k a(\frac{x_k}{|\cdot|^2}I_\alpha*|\cdot|^b|u|^p)|x|^b|u|^p\,dx\\
&+&b\int_{\R^N}\partial_ka(I_\alpha*|\cdot|^b|u|^p)x_k|x|^{b-2}|u|^p)\,dx\\
&=&\frac{2}p\Big((\alpha-N)\int_{\R^N}\partial_ka(\frac{x_k}{|\cdot|^2}I_\alpha*|\cdot|^b|u|^p)|x|^b|u|^{p}\,dx+b\int_{\R^N}\partial_kax_k(I_\alpha*|\cdot|^b|u|^p)|x|^{b-2}|u|^{p}\,dx\Big)\\
&-&\frac{p-2}p\int_{\R^N}\Delta a(I_\alpha*|\cdot|^b|u|^p)|x|^b|u|^p\,dx.
\end{eqnarray*}}
Finally,
\small{\begin{eqnarray*}
V''_a
&=&4\int_{\R^N}\partial_l\partial_ka\Re(\partial_ku\partial_l\bar u)\,dx-\int_{\R^N}\Delta^2a|u|^2\,dx\\
&+&\frac{4}p\Big((\alpha-N)\int_{\R^N}\partial_ka(\frac{x_k}{|\cdot|^2}I_\alpha*|\cdot|^b|u|^p)|x|^b|u|^{p}\,dx+b\int_{\R^N}\partial_kax_k(I_\alpha*|\cdot|^b|u|^p)|x|^{b-2}|u|^{p}\,dx\Big)\\
&-&\frac{2(p-2)}p\int_{\R^N}\Delta a(I_\alpha*|\cdot|^b|u|^p)|x|^b|u|^p\,dx.
\end{eqnarray*}}
This completes the proof.
\subsection{Proof of Lemma \ref{tch2}}
Denote, for $D\subset\R^N$, $S_T(D):=\cap_{(q,r)\in\Gamma}L^q_T(L^r(D)).$ With Duhamel formula and Strichartz estimates, one gets
\begin{eqnarray*}
(E)
&:=&\|\left\langle u-e^{i.\Delta}u_0\right\rangle\|_{S(0,T)}\\
&\lesssim&\|[I_\alpha*|\cdot|^b|u|^p]|x|^b|u|^{p-1}\|_{S_T'(|x|<1)}+\|\nabla\Big([I_\alpha*|\cdot|^b|u|^p]|x|^b|u|^{p-2}u\Big)\|_{S_T'(|x|<1)}\\
&+&\|[I_\alpha*|\cdot|^b|u|^p]|x|^b|u|^{p-1}\|_{S_T'(|x|>1)}+\|\nabla\Big([I_\alpha*|\cdot|^b|u|^p]|x|^b|u|^{p-2}u\Big)\|_{S_T'(|x|>1)}\\
&:=&(A)+(B)+(C)+(D).
\end{eqnarray*}
Take $\mu_1:=(\frac N{-b})^-$, $r_1:=\frac{2Np}{\alpha+N-\frac{2N}{\mu_1}}$ and $(q_1,r_1)\in\Gamma$. Then, $1+\frac\alpha N=\frac2{\mu_1}+\frac{2p}{r_1}$ and using H\"older and Hardy-Littlewood-Sobolev inequalities, one gets for $\theta_1:=q_1-2$,
\begin{eqnarray*}
(A)
&\lesssim&\|(I_\alpha*|\cdot|^b|u|^p)|x|^b|u|^{p-1}\|_{L^{q_1'}_T(L^{r'_1}(|x|<1))}\\\\
&\lesssim&\||x|^b\|_{L^\mu(|x|<1)}^2\|\|u\|_{r_1}^{2p-1}\|_{L^{q'_1}(0,T)}\\
&\lesssim&\|u\|_{L^\infty_T(L^{r_1})}^{2(p-1)-\theta_1}\|u\|_{L^{q_1}_T(L^{r_1})}^{1+\theta_1},
\end{eqnarray*}
Since $p>p_*$, it follows that $(2p-1)q'_1>q_1$ and 
$$0<\theta_1<2(p-1).$$
Moreover, $p_*<p<p^*$ implies that $2<r_1<\frac{2N}{N-2}.$ Now, compute
\begin{eqnarray*}
(F)&:=&|\nabla((I_\alpha *|\cdot|^b|u|^{p})|x|^b|u|^{p-2}u)|\\
&\lesssim& |(I_\alpha*|\cdot|^b|u|^p)|x|^b\nabla u|u|^{p-2}|+|(I_\alpha *|\cdot|^b\Re(\nabla u\bar u)|u|^{p-2})|x|^b|u|^{p-1}|\\
&+&|(I_\alpha*|\cdot|^b|u|^p)|x|^{b-1}|u|^{p-1}|+|(I_\alpha*|\,.\,|^{b-1}|u|^p)|x|^b|u|^{p-1}|.
\end{eqnarray*}
There exist four terms to control. The two first ones can be controlled as previously. 
\begin{eqnarray*}
(B_1)
&:=&\|(I_\alpha*|\cdot|^b|u|^p)|x|^b\nabla u|u|^{p-2}\|_{S_T'(|x|<1)}+\|(I_\alpha *|\cdot|^b\Re(\nabla u\bar u)|u|^{p-2})|x|^b|u|^{p-1}\|_{S_T'(|x|<1)}\\
&\lesssim&\|u\|_{L^\infty_T(L^{r_1})}^{2(p-1)-\theta_1}\|u\|_{L^{q_1}_T(W^{1,r_1})}^{1+\theta_1}.
\end{eqnarray*}
Choosing $\rho:=(\frac N{1-2b})^-$ and $r_2:=\Big(\frac{2Np}{N+\alpha+2b}\Big)^+$, one gets
$$1+\frac\alpha N=\frac{1}{r_2}+\frac1{\rho}+\frac{2(p-1)}{r_2}+\frac{1}{r_2}-\frac1N.$$
 Since $\frac1\rho:=(\frac{1-b}N)^++(\frac{-b}N)^+$, taking account of Hardy-Littlewood-Sobolev and H\"older inequalities, one obtains
\begin{eqnarray*}
(B_2)
&:=&\||x|^{b-1}(I_\alpha*|\cdot|^b|u|^p)|u|^{p-1}\|_{L^{q_2'}_T(L^{r_2'}(|x|<1))}\\
&+&\|(I_\alpha*|\cdot|^{b-1}|u|^p)|x|^{b}|u|^{p-1}\|_{L^{q_2'}_T(L^{r_2'}(|x|<1))}\\
&\lesssim&\|\||x|^b\|_{L^a(|x|<1)}\||x|^{b-1}\|_{L^{a'}(|x|<1)}\|u\|_{r_2}^{2(p-1)}\|u\|_{\frac{Nr_2}{N-r_2}}\|_{L^{q_2'}(0,T)}\\
&\lesssim&\|u\|_{L^\infty_T(L^{r_2})}^{2(p-1)-\theta_2}\|u\|_{L^{q_2}_T(W^{1,r_2})}^{1+\theta_2},
\end{eqnarray*}
where $(1+\theta_2)q_2'=q_2$. The condition $p_*<p<p^*$ gives $2<q_2<2p$ which is equivalent to $0<\theta_2<2(p-1)$.\\
Let us estimate $(C)$. 
Take $\mu_+:=(\frac N{-b})^+$, $r_+:=\frac{2Np}{\alpha+N-\frac{2N}{\mu_+}}$ and $(q_+,r_+)\in\Gamma$. Then, $1+\frac\alpha N=\frac2{\mu_+}+\frac{2p}{r_+}$ and using H\"older and Hardy-Littlewood-Sobolev inequalities, one gets for $\theta_+:=q'_+-2$,
\begin{eqnarray*}
(C)
&\lesssim&\|(I_\alpha*|\cdot|^b|u|^p)|x|^b|u|^{p-1}\|_{L^{q_+'}_T(L^{r'_+}(|x|>1))}\\\\
&\lesssim&\||x|^b\|_{L^{\mu_+}(|x|>1)}^2\|\|u\|_{r_+}^{2p-1}\|_{L^{q'_+}(0,T)}\\
&\lesssim&\|u\|_{L^\infty_T(L^{r_+})}^{2(p-1)-\theta_+}\|u\|_{L^{q_+}_T(L^{r_+})}^{1+\theta_+},
\end{eqnarray*}
Since $p>p_*$, it follows that $(2p-1)q'_+>q_+$ and 
$$0<\theta_+<2(p-1).$$
Moreover, $p_*<p<p^*$ implies that $2<r_+<\frac{2N}{N-2}.$ The estimation of $(D)$ follows arguing as in $(B)$. This finishes the proof.


\end{document}